\theoremstyle{plain}
\newtheorem{theorem}{Theorem}[section]
\numberwithin{equation}{section}
\numberwithin{figure}{section}
\theoremstyle{plain}
\theoremstyle{remark}
\theoremstyle{plain}
\theoremstyle{definition}
\theoremstyle{conjecture}
\newtheorem{problem}[theorem]{Problem}
\newtheorem{lemma}[theorem]{Lemma}
\newtheorem{corollary}[theorem]{Corollary}
\newcommand{\cubed}[1]{#1 \times #1 \times #1}  
\newcommand{\mir}{^{*}}                           
\newcommand{\fr}{\mathrm {fr}  }
\newcommand{\flip}[2]
{\ifthenelse{#1<#2}{\{#1#2\}}{\{#2#1\}}}
\newcommand{\cubepic}[8]
{\draw (0+#7,0+#8) grid (1+#7,4+#8);
\draw (-1+#7,1+#8) grid (0+#7,2+#8);
\draw (1+#7,1+#8) grid (2+#7,2+#8);
\node at (0.5+#7,0.5+#8) {#1};
\node at (0.5+#7,1.5+#8) {#2};
\node at (0.5+#7,2.5+#8) {#3};
\node at (0.5+#7,3.5+#8) {#4};
\node at (-0.5+#7,1.5+#8) {#5};
\node at (1.5+#7,1.5+#8) {#6};
\node at (0.5+#7,-0.5+#8){\flip{#1}{#3}\flip{#2}{#4}\flip{#5}{#6}     };   
 }
\begin{document}

\title{Automorphisms of $S_6$ and the Colored Cubes Puzzle}
\author{Ethan Berkove, David Cervantes Nava, Daniel Condon, and Rachel Katz}
\address{ Mathematics Department, Lafayette College,  Quad Drive, Easton, PA 18042, USA }
\email{berkovee@lafayette.edu}
\urladdr{http://math.lafayette.edu/people/ethan-berkove/}
\address{State University of New York at Potsdam}
\email{cervand195@potsdam.edu}
\address{Georgia Institute of Technology}
\email{dcondon6@gatech.edu}
\address{University of Chicago}
\email{katz.rachelbrooke@gmail.com}
\thanks{This project was funded by NSF grant DMS-1063070.  We also gratefully acknowledge support from the Provost Office at Lafayette College.}
\date{\today}
\subjclass[2000]{05B99, 52C99, 20B25}
\keywords{Discrete geometry, Cube stacking puzzle, Outer automorphism of $S_6$}

\begin{abstract}
Given a palette of six colors, a \textit{colored cube} is a cube where each face is colored with exactly one color and each color appears on some face.  Starting with an arbitrary collection of unit length colored cubes, one can try to arrange a subset of the collection into a $\cubed{n}$ cube where each face is a single color.  This is the \textit{Colored Cubes Puzzle}.  In this paper, we determine minimum size sets of cubes required to complete an $\cubed{n}$ cube's \textit{frame}, its corners and edges.  We answer this problem for all $n$, and in particular show that for $n \geq4$ one has the best possible result, that as long as there are enough cubes to build a frame it can always be done, regardless of the cubes in the collection.  Part of our analysis involves the set of $6$-colored cubes and its associated $S_6$ action.  In addition to the problem simplification this action provides, it also gives another way to visualize the outer automorphism of $S_6$.  
\end{abstract}

\maketitle

\section{Introduction}  
Given a palette of six colors, a ``colored cube'' is one where each face of the cube is one color and all six colors appear on some face.    A pleasant combinatorial argument shows there are exactly $30$ distinct colored cubes, which can be used in a number of interesting puzzles.  For example: 

\begin{enumerate}
\item \label{Prob1} Select one colored cube.  Find seven other distinct colored cubes and assemble all eight into a $2 \times 2 \times 2$ cube where each face is the same color.
\item \label{Prob2} Proceed as above, but select the cubes so that all touching internal faces also have matching colors.  
\item\label{Prob3} Find $27$ distinct colored cubes that can be used to construct a $\cubed{3}$ cube where each face is the same color.
\end{enumerate}
The first two questions where addressed by Percy MacMahon nearly a century ago.  MacMahon (1854 -- 1929) is possibly best known for authoring one of the first books on enumerative combinatorics \cite{MacMahon01} and for serving as the President of the London Mathematical Society.  However, he also had a proclivity towards mathematical recreations, and addressed and answered the first two questions in \cite{MacMahon02}.  The third problem appears on the website \cite{Conway}.  

Our motivating paper \cite{BerkoveEtAl} investigated an extension of the third problem on the list, where one takes $n^3$ \textit{arbitrary} colored cubes and determines when it is possible to construct an $n \times n \times n$ cube where each face is the same color.  This is the formal ``Colored Cubes Puzzle.''  The following is the main theorem from \cite{BerkoveEtAl}.

\begin{theorem} \label{thm:priormain}
Let $n > 2$.  Given $n^3$ arbitrary colored cubes, it is always possible to solve the Colored Cubes Puzzle.
\end{theorem}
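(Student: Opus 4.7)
The plan is to first fix a valid coloring of the six external faces --- a pairing of the six colors with the three pairs of opposite faces, giving $15$ assignments up to the rotational symmetry of the target cube --- then place cubes into the four position types: the $8$ corners (three specific colors meeting at a vertex), the $12(n-2)$ edge middles (two specific colors on adjacent faces), the $6(n-2)^2$ face centers (one specific color on one face), and the $(n-2)^3$ unconstrained interior positions.

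The first step is a classification of the $30$ distinct colored cubes by the perfect matching each one induces on the six colors (pairing opposite faces): each of the $15$ matchings corresponds to exactly two cubes, which are mirror images of each other. If the chosen color assignment uses matching $M$, then any cube whose matching is also $M$ can be rotated to fill any position. A cube with matching $M' \neq M$ can still fill an edge middle provided the required color pair is not an $M'$-pair, and a corner provided no two of the required three colors are $M'$-paired (together with a chirality check on which of the two cubes with matching $M'$ actually works at that corner).

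The heart of the argument is then to show that from any collection of $n^3$ arbitrary cubes, some color assignment $M$ admits a valid placement of the frame (corners and edges); face centers and interior can then be filled with leftover cubes, since any cube displays every color on some face. I would formulate the frame assignment as a bipartite matching problem between frame positions and collection cubes and verify Hall's condition by averaging over the $15$ choices of $M$: on average each cube is usable at a controlled fraction of positions, so by pigeonhole some $M$ must admit enough usable cubes at every subset of frame positions.

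The main obstacle is verifying Hall's condition in pathological cases, where the collection has very high multiplicities of a few cube types and the averaging bound is nearly tight. The hardest instance is $n = 3$, where $n^3 = 27 = 8 + 12 + 6 + 1$ leaves no slack at all, so a direct case analysis organized by multiplicity profile of the $15$ matching types is likely needed; conveniently, the $S_6$ action on the $30$ cubes (on which much of the present paper depends) cuts this analysis down by symmetry. For $n \geq 4$, the growing pool of unconstrained face-center and interior positions provides genuine slack, and a clean uniform counting argument should suffice.
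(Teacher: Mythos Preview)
This theorem is not proved in the present paper at all: it is quoted as the main theorem of the earlier paper \cite{BerkoveEtAl} and serves here only as background. The present paper instead proves the sharper Theorems~A and~B (any $24$ cubes build a $2$- or $3$-frame; any $12n-16$ cubes build an $n$-frame for $n\ge 4$), which of course imply the cited result since $n^3\ge 12n-16$. So there is no ``paper's own proof'' of this statement to compare against; if anything, the relevant comparison is with the methods behind Theorems~A and~B, which proceed by computer-verified base cases (Lemmas~\ref{lem:10cases} and~\ref{lem:computedcases}), tableau/$S_6$ reductions, and an inductive frame-extension argument---not by Hall's theorem.

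As to your proposal itself: what you have written is a plan, not a proof. You explicitly defer the hard work (``a direct case analysis \ldots\ is likely needed'' for $n=3$; ``a clean uniform counting argument should suffice'' for $n\ge 4$), and the one concrete mechanism you offer---averaging over the $15$ target matchings $M$ to get Hall's condition for some $M$---does not work as stated. Hall's condition is a universal statement over all subsets $S$ of frame positions, and showing that \emph{on average over $M$} each subset has enough neighbors does not produce a single $M$ for which \emph{every} subset does: the bad subset can move with $M$. You would need either a union bound over subsets (hopeless, there are exponentially many) or a structural argument identifying the few extremal subsets that could fail and handling those directly---at which point you are essentially back to the kind of case analysis the paper (and \cite{BerkoveEtAl}) actually carries out. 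The $n=3$ case with exactly $27$ cubes really is delicate, and the paper's proof of Theorem~\ref{thm:3soln} shows the sort of swap-and-rebuild reasoning that is needed; a one-line averaging claim will not substitute for it.
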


As noted in \cite{BerkoveEtAl}, there is a variation of this problem that is a better indicator of its difficulty.  In particular, the cube varieties that make up the  $\cubed{(n-2)}$ interior of an  $\cubed{n}$ cube can be arbitrary.  The same is true for the $6(n-2)^2$ cubes in the interior of the six faces, since all six colors are represented on every colored cube.   The only cubes that cannot be chosen arbitrarily are those that lie on the edges, which we call the \textit{frame} of the $\cubed{n}$ cube.   Therefore, once a cube's frame has been constructed, the Colored Cubes problem is solved.   The focus of this paper is Conjecture 5.4 in \cite{BerkoveEtAl}, which posits that for $n$ sufficiently large, when there are enough cubes to fill in a frame then one can actually construct it.  Our main results are:

\medskip

\noindent \textbf{Theorem A}. 
Any set of $24$ cubes is sufficient to build a $\cubed{2}$ and a $\cubed{3}$ frame.  This is the best possible result.

\medskip

\noindent \textbf{Theorem B}. 
When $n >3$, given an arbitrary collection of $12n-16$ colored cubes, one can always build a frame.

\medskip

An important component of a number of our proofs involves the obvious $S_6$ action on the cubes induced by color permutation.  There is an arrangement of the collection of $30$ colored cubes--the tableau--where the action has particularly nice properties.  This action allows us to simplify many our arguments, and plays a central role is a number of proofs in this paper.  As a bonus, the $S_6$ action on the tableau provides another concrete demonstration of  the action of an outer automorphism of $S_6$.  

In the next section, we introduce notation and a formal statement of the Colored Cubes problem.  In the third section, we describe the tableau, give some of its properties, and discuss its associated $S_6$ action.  In the last three sections we use the $S_6$ action and other results to prove our main theorems.  We conclude with some open questions.

It is a pleasure to thank Liz McMahon and Gary Gordon for many helpful conversations during the construction of these arguments, and for comments and suggestions which improved the overall quality of this paper.

\section{Definitions and Problem Statement}
To distinguish between different cubes, we set up an equivalence and say that cubes that have the same coloring up to isometry of $\mathbb{R}^3$ are of the same \textit{variety}.    As mentioned in the introduction, there are $30$ distinct varieties of colored cubes.   A \textit{solution} to the puzzle is an arrangement of $n^3$ cubes that forms an $n \times n \times n$ cube where each face is one color.  A \textit{corner solution} is a set of eight cubes that forms the corners of a solution.   We say that an $n \times n \times n$ solution cube is {\it modeled} after a colored cube if the solution and its model are of the same variety.  We note that a solution and its corner solution are also  always of the same variety.

We also track relative positions of face colors on the cube.  The unordered pair of colors that are opposite each other on a cube form an \textit{opposite pair}.  Similarly, the unordered pair of colors on faces that share an edge of a cube are an \textit{adjacent pair}.  We call the three colors on the faces which meet at the corner of cube a \textit{corner triple}.  Since order matters in this last case, we read the colors clockwise around the corner.  Furthermore, two corner triples are equivalent if one is a cyclic permutation of the other.  That is, $(2, 3, 6) \sim (6, 2, 3) \not \sim (6, 3, 2)$.  Corner triples are {\it mirror image} corner triples if they contain the same colors but are not equivalent.  Note that on a given colored cube, every possible unordered pair of distinct colors appears either as an adjacent pair or an opposite pair.  So every colored cube contains $12$ of the $15$ possible adjacent pairs, and $8$ of the $40$ possible corner triples.

In general, we will denote a generic cube by $c$ and its \textit{mirror image} by $c\mir$; $c$ and $c\mir$ together form a pair of \textit{mirror cubes}.  We note that  mirror cubes have the same opposite and adjacent pairs, but have mirror image corner triples.  We have the following useful characterization.  

\begin{lemma}\label{lem:mirror}
The varieties $c$ and $c\mir$  are related through the exchange of an opposite pair.  Therefore,  the varieties $c$ and $c\mir$ are the only two with the same opposite pairs. 
\end{lemma}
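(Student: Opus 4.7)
The plan is to prove the two assertions of the lemma in order. For the first, I construct the relation explicitly by a geometric argument. Consider the reflection $\rho$ of $\R^3$ through the plane that passes through the centroid of $c$ perpendicular to one chosen opposite-pair axis. This $\rho$ fixes setwise each of the four faces in the other two opposite pairs, while interchanging the two faces of the chosen pair. Consequently $\rho(c)$, viewed as a coloring, agrees with $c$ on four faces and has the two colors of the chosen opposite pair interchanged. To see that $\rho(c)$ is a distinct variety from $c$, suppose some rotation $r$ of the cube produced the same coloring change. Then $r$ would have to send each of the four unchanged faces to itself, since each bears a unique color; but the only cube rotation fixing four faces setwise is the identity, which does not swap the chosen opposite pair. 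Since $\rho$ is orientation-reversing and its effect on $c$ is not realized by any rotation, $\rho(c)$ represents the mirror variety $c\mir$.

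This already establishes that $c$ and $c\mir$ share the same three opposite pairs. For the second assertion --- that no third variety shares this structure --- I would count. The six colors admit $\binom{6}{2}\binom{4}{2}\binom{2}{2}/3! = 15$ partitions into three unordered pairs, and there are $30$ cube varieties, so on average each partition arises from exactly two varieties. To confirm exactness, fix a partition $\{\{1,2\},\{3,4\},\{5,6\}\}$ and orient any cube with this structure so that $1$ is on top and $2$ is on the bottom. The four equatorial faces then carry $3,4,5,6$ with $3$ opposite $4$ and $5$ opposite $6$; modulo the four-fold rotation about the vertical axis, only the two cyclic orders $3,5,4,6$ and $3,6,4,5$ satisfy these constraints. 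Hence precisely two varieties realize each opposite-pair partition, and by the first part these must be $c$ and $c\mir$.

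The most delicate point, I expect, is verifying in the first step that $\rho$ produces a genuinely different variety from $c$. This relies on the fact that the only cube rotation fixing four faces setwise is the identity, which is available to us only because the six face colors are pairwise distinct; the orientation-reversing character of $\rho$ then truly survives at the level of colorings. Once this is secured, the remaining counting argument is routine, and together they yield both clauses of the lemma.
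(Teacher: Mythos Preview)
Your proof is correct. For the first assertion you take the same route as the paper---reflection through a plane parallel to a face---though you supply the verification, which the paper omits, that the reflected coloring is not rotationally equivalent to $c$. For the second assertion the paper proceeds differently: rather than counting, it simply remarks that one should ``iterate the mirror procedure,'' the point being that exchanging one opposite pair and then another composes two reflections into a rotation and hence returns to the variety $c$, so the mirror operation can only toggle between two varieties sharing the given opposite pairs. Your counting argument (fifteen opposite-pair partitions, thirty varieties, and an explicit check that each partition is realized exactly twice) is more self-contained and leaves nothing to the reader; the paper's iteration argument is shorter but requires unpacking why two opposite-pair swaps amount to a rigid rotation of the cube. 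Either approach is adequate for this elementary lemma.
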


\begin{proof}
This is directly seen by visualizing a mirror placed parallel to some face of a cube $c$.    For the second part, iterate the mirror procedure.
\end{proof}

As mentioned in the introduction, the Colored Cubes puzzle is solved once the frame is complete.  In an $\cubed{n}$ cube, this consists of $12(n - 2)+8 = 12n - 16$ cubes.  The solution to the Colored Cubes Puzzle in \cite{BerkoveEtAl} involved the construction of a frame given an arbitrary collection of $n^3$ cubes.  Since the number of cubes in the frame grows linearly with $n$, starting with $n^3$ cubes is generous.  In fact, since there are only $30$ distinct varieties of colored cubes, when $12n -16  < \frac{n^3}{30}$, the pigeonhole principle guarantees a solution for the frame using just one variety of cube.  This happens for $n \geq 19$.  We therefore focus our attention towards determining the value of the \textit{frame function} $\fr(n)$, which gives the size of the smallest collection of cubes where one is guaranteed to be able to build a frame, regardless of the component cube varieties.   Knowing the value of $\fr(n)$ gets to the heart of the Colored Cubes puzzle, as it is the best possible result.

\section{The cube tableau and automorphisms of $S_6$}\label{section:Aut}

We describe an arrangement of the 30 colored cube varieties, the tableau, which will aid in the proofs that follow.  This tableau, which can be found in the appendix, is an arrangement of the 30 cube varieties in a $6 \times 6$ matrix.  The six diagonal entries are blank, and the other $30$ slots each contain the net of one of the cube varieties.  Under each net are three pairs of numbers in parentheses, which represent opposite faces of the cube.  The collection of the $30$ distinct varieties of colored cubes contains a great deal of structure, much of which is visible in the tableau.

\begin{enumerate}
\item  There are $15$ combinations of six colors into three sets of two, and all $15$ are represented in the nets above the diagonal and in the nets below the diagonal. 
\item Each of the 30 possible color pairs appears exactly once in each row and each column.  Geometrically, this means that two colors are an opposite pair in exactly one cube per row and exactly one cube per column. 
\item The cubes exhibit mirror symmetry across the diagonal line of the tableau.  This can also be seen by Lemma \ref{lem:mirror} since mirror cubes have the same opposite pairs.
\end{enumerate}
       
There is a natural $S_6$ action on the tableau that arises from permuting the set of six face colors.   Furthermore, the action provides a way to visualize the outer automorphism of $S_6$.  Using terminology that goes back to Sylvester \cite{Sylvester}, call an unordered pair of six colors a \textit{duad} and a collection of three duads a \textit{syntheme}.  A collection of five synthemes such that each of the $15$ duads appears exactly once is a \textit{pentad}.  Duads, synthemes, and pentads correspond to opposite pairs, the three opposite pairs in a cube, and the collection of such pairs in each row and column of the tableau, respectively.  

\begin{lemma} \label{lem:rowfix}
A permutation of the color palette induces a permutation that sends rows to rows and columns to columns.  Once the permutation is determined on the top row of the tableau, the action on the rest of the tableau is uniquely determined.
\end{lemma}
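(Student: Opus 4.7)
The plan is to interpret the rows and columns of the tableau as pentads of the color set $\{1,\ldots,6\}$ and then to track the action of $\sigma\in S_6$ through its induced actions on duads, synthemes, and pentads.

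First I would identify each row (and each column) with a pentad. By property~(2), each of the fifteen duads appears as an opposite pair of exactly one cube in row $i$, so the opposite pairs of the five cubes in row $i$ together constitute a pentad $P_i$. The same argument applies to columns, and Lemma~\ref{lem:mirror} together with the mirror symmetry in property~(3) shows that row $i$ and column $i$ carry the same pentad. Thus the six rows, and separately the six columns, are indexed by the six pentads.

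A color permutation $\sigma$ acts on duads by $\{a,b\}\mapsto\{\sigma(a),\sigma(b)\}$, hence on synthemes and on pentads, inducing a permutation $\tilde\sigma$ of the six pentad labels (this is the map realizing the outer automorphism of $S_6$). A syntheme lies in exactly two of the six pentads (since $6\cdot 5 = 2\cdot 15$), so the cube at position $(i,j)$ is characterized as the one whose syntheme is in $P_i\cap P_j$, and its $\sigma$-image is the cube whose syntheme is in $P_{\tilde\sigma(i)}\cap P_{\tilde\sigma(j)}$. The two positions in the tableau carrying such a cube are $(\tilde\sigma(i),\tilde\sigma(j))$ and $(\tilde\sigma(j),\tilde\sigma(i))$, so the image lands at one of them. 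The first assertion then amounts to showing that this choice can be made uniformly so that row $i$ goes entirely to row $\tilde\sigma(i)$ and, symmetrically, column $i$ goes to column $\tilde\sigma(i)$; I would obtain this by invoking the above-diagonal convention of property~(3) and verifying that the $\sigma$-action commutes with the choice of mirror representative used in constructing the tableau.

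For the second assertion, once the images of the top-row cubes are recorded, the common row index of these five images is $\tilde\sigma(1)$ and their individual column indices give $\tilde\sigma(2),\ldots,\tilde\sigma(6)$. This recovers all of $\tilde\sigma$, and then every other cube $c_{i,j}$ is forced to map to $c_{\tilde\sigma(i),\tilde\sigma(j)}$. The main obstacle I anticipate is the uniformity step in the previous paragraph: Lemma~\ref{lem:mirror} shows that a single transposition such as $(5,6)$ already sends some cubes to their mirrors, so the argument cannot be purely formal and must engage with the specific mirror-representative convention of the tableau.
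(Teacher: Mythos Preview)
Your framework is sound: identifying rows and columns with pentads, noting that a syntheme lies in exactly two pentads, and reducing the question to whether the cube at $(i,j)$ lands at $(\tilde\sigma(i),\tilde\sigma(j))$ or at $(\tilde\sigma(j),\tilde\sigma(i))$. You also correctly isolate the crux of the argument---the ``uniformity step.'' The problem is that you do not actually carry it out, and the method you propose for doing so will not work.

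Specifically, there is no ``above-diagonal convention'' in the tableau. All thirty varieties are present, one in each off-diagonal cell, with mirror pairs occupying symmetric positions $(i,j)$ and $(j,i)$. So the issue is not a choice of representative for a mirror class; it is that the syntheme of $\sigma\cdot c$ only pins down the unordered pair $\{\tilde\sigma(i),\tilde\sigma(j)\}$, while the actual variety $\sigma\cdot c$ sits in exactly one of the two cells. Appealing to the mirror symmetry of the tableau tells you only that $\sigma\cdot c$ and $\sigma\cdot c\mir$ land in symmetric cells---it does not by itself force a whole row to go to a row rather than a column. Your own final sentence acknowledges that the argument ``cannot be purely formal,'' but you stop there.

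The paper closes this gap by reducing to transpositions and making one explicit computation. For $(a_1,a_2)$, the unique cube $c$ in a given row containing the duad $\{a_1,a_2\}$ is sent to $c\mir$, so the row goes either to the row of $c\mir$ or to its column. One then picks a second cube $d$ in the same row, applies $(a_1,a_2)$ to its syntheme, and observes that the resulting syntheme cannot lie in the column pentad of $c\mir$ (because $d\mir$ already occupies the $\{a_5,a_6\}$ slot there). This is the missing concrete step your proposal needs; once you have it for transpositions, the general case follows since transpositions generate $S_6$. Your argument for the second assertion is fine once the first is established.
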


\begin{proof}
A color permutation of the palette takes pentads to pentads, so it is sufficient to show that a row isn't taken to a column.  Since all permutations in $S_6$ are generated by transpositions, we will show that applying a transposition sends rows to rows.  Let  $(a_1, a_2)$ be the transposition.  There is precisely one cube per row with the duad $\{a_1, a_2\}$ as part of its syntheme, so call it cube $c$.  By Lemma \ref{lem:mirror}, exchanging colors for $c$ has the geometric effect of sending it to its mirror image $c\mir$.  That means that $c$'s row pentad is sent either to the row pentad of $c\mir$ or the column pentad of $c\mir$.   Take another cube $d$ in $c$'s row with syntheme $\{a_1 a_3\}\{a_2 a_4\}\{a_5 a_6\}$.  Applying $(a_1, a_2)$ to $d$ yields the syntheme $\{a_2 a_3\}\{a_1 a_4\}\{a_5 a_6\}$.  Since $d\mir$ is part of $c\mir$'s column pentad and already contains the duad $\{a_5a_6\}$,  $(a_1, a_2)$ must send $d$ to $c\mir$'s row pentad.  Finally, since the tableau has mirror symmetry across the diagonal, once the action on the rows is determined, so is the action on the columns.
\end{proof}

\begin{lemma}\label{lem:stabilizer}
Consider the set of permutations of the color palette that fix a distinguished variety in the top row of the tableau.  This set is isomorphic to $S_4$, and it acts faithfully on the non-distinguished cubes in the top row.
\end{lemma}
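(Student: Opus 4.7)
My plan is to identify the stabilizer as the rotation group of the cube and then deduce faithfulness from Lemma \ref{lem:rowfix} together with the fact that $S_6$ has very few normal subgroups.  For the first step, I would observe that $S_6$ acts transitively on the $30$ cube varieties, since any two colorings of a fixed cube differ by a relabeling of the six colors.  Orbit--Stabilizer then gives $|\mathrm{Stab}(c)| = 720/30 = 24$.  On the other hand, a color permutation fixes the variety of $c$ exactly when it is induced by a geometric rotation of $c$: two colorings in the same variety differ by some rotation, and conversely every rotation preserves the variety.  The rotation group of the cube acts faithfully on the four space diagonals as $S_4$, and since $c$'s six faces carry distinct colors, distinct rotations induce distinct color permutations; this exhibits an embedding $S_4 \hookrightarrow S_6$ whose image has order $24$ and therefore equals $\mathrm{Stab}(c)$.

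For faithfulness, let $c_0$ be the distinguished cube and suppose $\sigma \in \mathrm{Stab}(c_0)$ fixes every other cube in the top row.  Then $\sigma$ fixes all five cubes in the top row.  By Lemma \ref{lem:rowfix}, the action of $\sigma$ on the rest of the tableau is uniquely determined by its (trivial) action on the top row, and so must coincide with the action of the identity; that is, $\sigma$ acts trivially on the entire tableau.  Hence $\sigma$ lies in the kernel $K$ of the $S_6$-action on the $30$ cube varieties.  Now $K$ is a normal subgroup of $S_6$ contained in $\mathrm{Stab}(c_0)$, so $|K| \leq 24$; since $A_6$ is simple, the only normal subgroups of $S_6$ are $\{e\}$, $A_6$, and $S_6$, and of these only $\{e\}$ fits inside an order-$24$ subgroup.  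Therefore $K = \{e\}$, forcing $\sigma = e$.

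The crucial move is the passage from ``$\sigma$ fixes the top row'' to ``$\sigma$ fixes the whole tableau,'' which is supplied directly by Lemma \ref{lem:rowfix}; the remainder is a routine application of Orbit--Stabilizer, the standard identification of the cube's rotation group with $S_4$, and the classification of normal subgroups of $S_6$.  I do not expect any genuine obstacles once these ingredients are assembled.
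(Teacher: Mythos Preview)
Your proposal is correct and follows essentially the same route as the paper.  Both arguments identify the stabilizer with the rotation group of the cube (you make the Orbit--Stabilizer step explicit, whereas the paper simply asserts the identification), and both deduce faithfulness by using Lemma~\ref{lem:rowfix} to pass from ``fixes the top row'' to ``fixes the whole tableau'' and then invoking the normal-subgroup structure of $S_6$ to force the kernel to be trivial.
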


\begin{proof}
The stabilizer of the distinguished cube is its group of direct isometries in $\mathbb{R}^3$, which is $S_4$, and by Lemma \ref{lem:rowfix}, any palette permutation that fixes a the distinguished variety sends its row to itself.  A simple check shows that at least one element of the stabilizer acts non-trivially on the tableau.  Further, if a permutation fixes every cube in the first row, by Lemma \ref{lem:rowfix} it fixes every cube in the tableau and must be in the tableau stabilizer, which is a normal subgroup of $S_6$.  The only possible  non-trivial normal subgroup, $A_6$, is clearly too large to be this stabilizer.  We conclude the action is faithful.
\end{proof}

\begin{corollary}
Let $\sigma$ be the map that takes $\alpha \in S_6$ to its action on the six pentads via permutation of the  color palette.  Then $\sigma$ is an outer automorphism of $S_6$.
\end{corollary}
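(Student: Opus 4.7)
\medskip

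\noindent\emph{Proof proposal.} The plan is to recognize $\sigma$ as a group homomorphism $S_6\to S_6$, verify injectivity (which forces bijectivity by cardinality), and finally certify that $\sigma$ is not inner by comparing cycle types. For the first step, I would note that by Lemma \ref{lem:rowfix} every palette permutation sends rows to rows and hence permutes the six pentads; since composition of palette permutations corresponds to composition of the induced pentad actions, $\sigma$ is a group homomorphism into the symmetric group on the six pentads, which is isomorphic to $S_6$.

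Next I would establish injectivity. Let $K=\ker\sigma$. As a normal subgroup of $S_6$, we have $K\in\{\{e\},A_6,S_6\}$. Any $\alpha\in K$ fixes every row as a set and in particular restricts to a permutation of the five cubes in the top row. Lemma \ref{lem:rowfix} forces that restriction to determine the action of $\alpha$ on every cube of the tableau, and the faithfulness argument recorded in the proof of Lemma \ref{lem:stabilizer} shows that only the identity of $S_6$ fixes every cube. Hence restriction to the top row is an injection $K\hookrightarrow S_5$, giving $|K|\le 120$, so $K=\{e\}$, and cardinality makes $\sigma$ an automorphism.

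To show $\sigma$ is outer I would demonstrate that no transposition $\tau=(a,b)\in S_6$ fixes any pentad. By property (2) of the tableau, each row contains exactly one cube $c$ with opposite pair $\{a,b\}$; Lemma \ref{lem:mirror} then says that applying $(a,b)$ exchanges this pair and sends $c$ to its mirror $c\mir$; and the mirror symmetry of the tableau across its (blank) diagonal, property (3), places $c\mir$ in a row different from $c$'s. Thus $\tau$ moves some cube out of every row, so $\sigma(\tau)$ has no fixed pentad. An element of order $2$ on $6$ points with no fixed point must be a product of three disjoint transpositions, so $\sigma(\tau)$ has cycle type $2^{3}$ while $\tau$ has cycle type $2^{1}1^{4}$. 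Since inner automorphisms preserve cycle type, $\sigma$ cannot be inner.

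The main obstacle is the outer-ness step, since the first two are essentially bookkeeping once the earlier lemmas are in hand. The cycle type comparison rests on the geometric fact that $c$ and $c\mir$ always land in distinct rows, which combines the rowwise uniqueness of opposite pairs with the diagonal mirror symmetry of the tableau; once those are in place, the conclusion is immediate.
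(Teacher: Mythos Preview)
Your argument is correct and follows the same route as the paper: both establish that a transposition $(a,b)$ acts on the six pentads as a fixed-point-free involution (hence with cycle type $2^3$), and conclude from the cycle-type mismatch that $\sigma$ is not inner. Your proof is in fact more complete than the paper's, which jumps directly to the cycle-type computation without explicitly verifying that $\sigma$ is a bijective homomorphism; your injectivity argument via $|K|\le 120$ and the normal-subgroup trichotomy supplies that missing step cleanly.
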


\begin{proof}
Given a transposition $(a_1, a_2)$, since every row and column in the tableau contains exactly one cube with the duad $\{a_1, a_2\}$, each of these cubes gets sent to a different row, the row containing its mirror image.  Therefore, the effect of $(a_1, a_2)$ on the tableau is to swap three pairs of rows.  This map isn't an inner automorphism, since it doesn't preserve cycle structure.  
\end{proof}

The construction of the tableau and its relation to automorphisms of $S_6$ were previously known.  J. H. Conway used the tableau to provide a complete answer to Puzzle \ref{Prob2} in the Introduction \cite{Conway}.  In Conway's notation, tableau rows are labeled $A$ through $F$ and tableau columns are labeled $a$ through $f$.  The cubes in the tableau then have the following ``coordinates.''

\begin{center}
\begin{tabular}{c c c c c c}
& Ab & Ac & Ad & Ae & Af \\ 
Ba & & Bc & Bd & Be & Bf \\ 
Ca & Cb & & Cd & Ce & Cf \\ 
Da & Db & Dc & & De & Df \\ 
Ea & Eb & Ec & Ed & & Ef \\
Fa & Fb & Fc & Fc & Fe & 
\end{tabular}
\end{center}
In this nomenclature, cubes Xy and Yx form a mirror pair.  

In addition, P. Cameron mentions the connection between the tableau and automorphisms of $S_6$ in a WordPress blog \cite{Cameron}.  The tableau provides a particularly nice demonstration of the action of the outer automorphism of $S_6$, and we believe it deserves to be better known.  

There are additional relationships between varieties in the rows and columns which we will use in subsequent sections.  Before we state these results we recall two lemmas from \cite{BerkoveEtAl}.

\begin{lemma} \label{lem:sidefacts} \cite[Lemma 2.6]{BerkoveEtAl}
Two colored cubes share exactly nine, ten, or twelve adjacent pairs according to whether they share exactly zero, one, or three opposite pairs, respectively.
\end{lemma}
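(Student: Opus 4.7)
The plan is to reduce the lemma to a simple inclusion–exclusion count on the $\binom{6}{2}=15$ unordered pairs of distinct colors, after first constraining how many opposite pairs two cubes can possibly share.

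First I would record the basic numerology: every colored cube determines a partition of the six-color palette into three opposite pairs, which accounts for $3$ of the $15$ color pairs; the remaining $12$ pairs are exactly the adjacent pairs of the cube. Writing $O_1, O_2$ for the opposite-pair sets of the two cubes $c_1, c_2$, and $A_1 = \binom{[6]}{2}\setminus O_1$, $A_2 = \binom{[6]}{2}\setminus O_2$ for the adjacent-pair sets, I let $k=|O_1\cap O_2|$. The next step is to rule out $k=2$: since $O_1$ and $O_2$ are each perfect matchings on the six colors, any two shared matched pairs use up four colors, and then the remaining pair in each matching must be the unique pair on the two leftover colors. Hence those third pairs coincide, forcing $k=3$. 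So $k\in\{0,1,3\}$.

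Finally I would count the adjacent pairs common to both cubes. A pair lies in $A_1\cap A_2$ iff it lies in neither $O_1$ nor $O_2$, so by inclusion–exclusion
\[
|A_1\cap A_2| \;=\; 15 - |O_1\cup O_2| \;=\; 15 - (3+3-k) \;=\; 9+k.
\]
Substituting $k=0,1,3$ yields $9, 10, 12$ shared adjacent pairs respectively, which is exactly the claim.

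The argument is essentially a one-line count once the range of $k$ is pinned down, so there is no real obstacle; the only substantive observation is the partition/matching argument that eliminates the case $k=2$, and that follows immediately from the fact that opposite pairs of a single cube exhaust the palette.
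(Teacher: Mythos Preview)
Your proof is correct. The inclusion--exclusion count $|A_1\cap A_2|=15-|O_1\cup O_2|=9+k$ is clean, and the perfect-matching argument ruling out $k=2$ is the only nontrivial ingredient and is handled properly.

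There is nothing to compare against here: the paper does not supply its own proof of this lemma but simply imports it from \cite[Lemma~2.6]{BerkoveEtAl}. Your argument therefore fills a gap the paper leaves to the reference, and does so in an entirely self-contained way.
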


\begin{lemma}  \label{lem:cornerfacts} \cite[Lemma 2.7]{BerkoveEtAl}
Given two varieties:
\begin{enumerate}
\item If they have no opposite pairs in common, then they share zero or two corner triples.
\item If they have one opposite pair in common, then they share exactly two corner triples.
\item If they have three opposite pairs in common, then they share zero or eight corner triples.
\end{enumerate}
\end{lemma}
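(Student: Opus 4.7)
The plan is to parameterize the corner triples of each cube via its opposite-pair structure and then compare the two varieties case by case. Orient $c$ so that its opposite pairs $(A,A')$, $(B,B')$, $(C,C')$ lie along the $x$-, $y$-, $z$-axes. Each corner of $c$ is then indexed by a sign vector $\epsilon=(\epsilon_1,\epsilon_2,\epsilon_3)\in\{\pm\}^3$: the corner at $\epsilon$ displays colors $A^{\epsilon_1}$, $B^{\epsilon_2}$, $C^{\epsilon_3}$, and its clockwise cyclic order is determined by the chirality $\epsilon_1\epsilon_2\epsilon_3$ together with the axis-to-color assignment. Thus a corner triple of $c$ is encoded by a transversal of $c$'s syntheme plus a chirality sign. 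Parameterize $d$ analogously using its syntheme. A corner triple is then shared by $c$ and $d$ if and only if the two parameterizations produce the same unordered three-color set \emph{and} the induced cyclic orders match at the corresponding corners.

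Case (3) is immediate from Lemma~\ref{lem:mirror}: identical synthemes force $c$ to equal either $d$ or $d^*$, giving $8$ or $0$ shared triples respectively.

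For case (2), without loss of generality $c$ and $d$ share the pair $(A,A')$; up to color relabeling $d$'s other pairs may be taken as $(B,C)$ and $(B',C')$. The transversals common to both synthemes are the four color sets $\{A,B,C'\}$, $\{A,B',C\}$, $\{A',B,C'\}$, $\{A',B',C\}$. Performing the chirality comparison at the four corresponding corner pairs (a short finite calculation) shows exactly two produce matching cyclic orders and two produce reversed orders, so $c$ and $d$ share exactly two corner triples. For case (1) the same procedure yields exactly two common color sets, and these sit at opposite corners of both $c$ and $d$ (since the color-opposite involutions $\sigma_c$ and $\sigma_d$ both send one common color set to the other). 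Because the cycle at the opposite corner of any cube is obtained from the original by color-substitution-and-reversal, one verifies that the matching status at the two common corners is necessarily the same, so the number of shared triples is either $0$ or $2$.

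The main bookkeeping challenge is to fix a consistent clockwise-sign convention that lets chiralities translate unambiguously between the $c$- and $d$-parameterizations; once that is done each case is a brief finite enumeration. Alternatively one could invoke the $S_6$ action on the tableau developed in Section~\ref{section:Aut} to collapse the number of configurations that must be checked directly, but the enumeration above is already short and self-contained.
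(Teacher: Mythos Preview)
Your argument is correct, though the chirality checks in cases (1) and (2) are asserted rather than carried out.  In case~(1) the key unstated fact is that when two synthemes share no duad their union is a single $6$-cycle, so the two common transversals $T_1,T_2$ are the alternating independent sets; then $\sigma_c^{-1}\sigma_d$ restricted to $T_1$ is ``advance two steps around the $6$-cycle,'' hence a $3$-cycle on $T_1$, and therefore $\sigma_c$ and $\sigma_d$ induce the \emph{same} map on cyclic orders of $T_1$.  That is what forces the matching status at $T_1$ and $T_2$ to agree.  You should state this explicitly, since without it ``one verifies'' hides the only nontrivial step.

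Your route is genuinely different from the paper's.  The paper does not argue case-by-case on the number of shared opposite pairs; instead it classifies the $29$ varieties other than $c$ by how they are \emph{constructed} from $c$: eight arise from cyclically permuting a corner triple (sharing two corner triples and no opposite pairs), their eight mirrors (sharing neither), twelve from exchanging an adjacent pair (sharing two corner triples and one opposite pair), and $c^{*}$ (sharing all opposite pairs and no corners).  The lemma is then read off from this $8+8+12+1$ decomposition.  The paper's approach yields extra structure---in particular the decomposition is reused immediately afterward to organize the tableau (Corollary~\ref{cor:tableaustructure}) and to locate the varieties sharing no corners with a given one (Corollary~\ref{cor:cornershare}).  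Your approach is more self-contained for the lemma as stated and does not require first knowing that those four construction types exhaust the $29$ varieties, but it does not directly supply the classification used downstream.
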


The proof of Lemma \ref{lem:cornerfacts} follows from the description of how a variety $c$ is related to the $29$ other cube varieties, which we summarize here. There are eight varieties that arise from choosing a corner triple of $c$ and cyclically permuting its colors clockwise and counterclockwise.  These varieties share two corner triples but no opposite pairs with $c$.   Their mirror images form eight new varieties that share no corner triples and no opposite pairs with $c$.   There are twelve varieties that come from exchanging an adjacent pair on $c$.  These varieties all share two adjacent corners and one opposite pair with $c$.  The last variety is $c\mir$, which shares three opposite pairs and no corner triples with $c$.  This characterization also provides additional structure to the tableau, as well as a means for its construction.

\begin{corollary} \label{cor:tableaustructure}
Fix a distinguished variety $c$ in position Ab in the tableau.  Then
\begin{enumerate}
\item The variety Ba is $c\mir$.
\item The eight varieties related to variety $c$ by cyclically permuting  of a corner followed by a mirror image are in column b and row A.  
\item The eight varieties related to variety $c$ by cyclically permuting of a corner are in column a and row B.  
\item The twelve varieties related to variety $c$ by edge flipping are in columns c, d, and e, and rows C and D, and E.
\end{enumerate}
\end{corollary}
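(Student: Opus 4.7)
The plan is to combine Lemma \ref{lem:mirror}, the pentad structure of each row and column (property (2)), the diagonal mirror symmetry of the tableau (property (3)), Lemma \ref{lem:stabilizer}, and the classification of the $29$ varieties related to $c$ given in the paragraph just before the corollary.

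Part (1) is immediate: Lemma \ref{lem:mirror} says $c$ and $c^*$ are the unique pair of varieties with identical opposite pairs, and by property (3) the cube at the diagonally reflected position Ba is the mirror of the cube at Ab, namely $c^*$.

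For parts (2) and (3), the pentad property forces every cube other than $c$ in row A or column b, and every cube other than $c^*$ in row B or column a, to share no opposite pair with $c$: in such a row or column the five synthemes list each of the $15$ duads exactly once, and $c$ (respectively $c^*$, which has the same syntheme) already accounts for $c$'s three duads. The classification then identifies the $16$ varieties sharing zero opposite pairs with $c$ as $C_1 \cup C_2$, where $C_1$ is the set of $8$ cyclic-corner-permutation varieties of $c$ and $C_2 = \{X^* : X \in C_1\}$ consists of their mirror images. By Lemma \ref{lem:stabilizer} and its proof, the palette stabilizer $S_4$ of $c$ acts faithfully and transitively on each of the four $4$-element sets obtained from row A, column b, row B, and column a by deleting $c$ or $c^*$: faithfulness for any row or column containing a distinguished cube holds because its five cubes form a pentad, so any palette permutation fixing them pointwise preserves every duad and is therefore the identity; transitivity is automatic for a faithful $S_4$-action on $4$ elements. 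Both $C_1$ and $C_2$ are $S_4$-invariant because rotations of $c$ permute its corner triples among themselves, so each of the four orbits lies entirely in one of $C_1$ or $C_2$. Property (3) pairs the orbits via $X \leftrightarrow X^*$ into two diagonally mirrored pairs that must split between $C_1$ and $C_2$; a single explicit check against the appendix tableau then pins down the specific assignment stated (namely $C_2$ in row A $\cup$ column b and $C_1$ in row B $\cup$ column a).

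Part (4) follows by elimination: the remaining $12$ non-blank positions, lying in rows C--F and columns c--f with the four diagonal blanks removed, must house the remaining $12$ varieties, which the classification identifies as the edge-flip varieties. The main obstacle is the final $C_1/C_2$ label assignment in parts (2) and (3); the combinatorial and symmetry arguments carve out four $S_4$-orbits each confined to $C_1$ or $C_2$ and force the pairs to split, but do not by themselves decide which orbit carries which label, so verification against the explicit tableau (or a direct computation tracking a chosen $3$-cycle of a corner triple) is needed.
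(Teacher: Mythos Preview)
Your overall architecture is sound and is more explicit than the paper's own proof, which is quite terse: the paper simply observes that row A and column b are pentads through $c$, asserts (with a ``by checking cases'') that the eight non-$c$ varieties there are the cyclic-permutation-plus-mirror ones, invokes mirror symmetry for row B and column a, and finishes part (4) by elimination.  Your use of the $S_4$ orbit structure to show that each four-element set must lie entirely in $C_1$ or in $C_2$ is a genuine addition, and your observation that a faithful $S_4$-action on four points is automatically transitive is correct (any intransitive action on four points factors through $S_3$, $S_2\times S_2$, or smaller, hence has nontrivial kernel).  Both your argument and the paper's agree that the final $C_1$/$C_2$ label must be settled by a direct check.

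There is, however, one real gap.  Your justification of faithfulness on column b, row B, and column a reads: ``its five cubes form a pentad, so any palette permutation fixing them pointwise preserves every duad and is therefore the identity.''  Fixing a variety only forces $\sigma$ to permute that variety's three opposite-pair duads among themselves, not to fix each one individually; for instance, any nontrivial rotation in $\mathrm{Stab}(c)\cong S_4$ fixes $c$ while moving some of $c$'s duads.  So the inference ``fixes the five cubes $\Rightarrow$ fixes all fifteen duads'' is not immediate from the pentad property alone and needs an argument you have not supplied.  The clean fix is to use Lemma \ref{lem:rowfix} exactly as in the proof of Lemma \ref{lem:stabilizer}: if $\sigma$ fixes any row pointwise, then (since a row determines the whole tableau) $\sigma$ fixes every variety, hence lies in the kernel of the $S_6$-action, which is a normal subgroup of $S_6$ too small to be $A_6$ and so is trivial.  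For a column, first pass to the mirror row using property~(3) and the fact that palette permutations commute with taking mirrors.  With this repair your proof goes through.
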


\begin{proof}
Since the tableau has mirror symmetry, $c\mir$ is in position Ba.  Column b and row A are pentads containing variety $c$.  From the characterization in Lemma \ref{lem:cornerfacts} and the discussion afterwards, these eight varieties must be formed from $c$ by cyclically permuting a corner followed by a mirror image.  By checking case, one finds that the varieties that are formed using a clockwise cyclic permutation constitute one pentad, and the varieties that are formed using a counterclockwise cyclic permutation form the other.  Using mirror symmetry, column a and row B are formed from $c$ by cyclically permuting a corner.  That leaves the last twelve varieties in the claimed positions.
\end{proof}

\begin{corollary} \label{cor:cornershare}
Let Xy be an arbitrary variety in the tableau.  Then the varieties that share no corners with variety Xy are precisely variety Yx, the varieties in column y, and the varieties in row X.
\end{corollary}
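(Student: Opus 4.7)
The plan is to deduce this corollary as a direct consequence of Corollary \ref{cor:tableaustructure} together with the four-way classification of the $29$ other cube varieties relative to a distinguished cube, which is summarized in the discussion preceding that corollary. I would first reduce to the case Xy $=$ Ab by relabeling the color palette; call this distinguished cube $c$.

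Next I would separate the remaining $29$ varieties into the four families listed before Corollary \ref{cor:tableaustructure}: (i) the mirror $c\mir$, which shares three opposite pairs and no corner triples with $c$; (ii) the eight corner-cyclic-permutation varieties, which share no opposite pair and two corner triples with $c$; (iii) the eight corner-cyclic-permutation-followed-by-mirror varieties, which share no opposite pair and no corner triples with $c$; and (iv) the twelve adjacent-pair-flip varieties, which share one opposite pair with $c$ and, by Lemma \ref{lem:cornerfacts}(2), share exactly two corner triples with $c$. Consequently, the varieties that share no corner triple with $c$ are precisely those in families (i) and (iii).

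To finish, I would read off the tableau positions of these two families from Corollary \ref{cor:tableaustructure}. By part (1) of that corollary, family (i) is the single cube Ba $=$ Yx. By part (2), family (iii) is exactly $(\text{column b} \cup \text{row A}) \setminus \{c\}$, i.e.\ the entries of column y and row X other than Xy itself. Combining these two identifications gives the $1 + 4 + 4 = 9$ varieties asserted by the corollary.

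The only delicate point, which I would flag as the main piece of bookkeeping, is confirming that the four families (i)--(iv) are genuinely disjoint and exhaust the remaining $29$ varieties, so that no corner-permute or edge-flip variety can accidentally lie in column y or row X. This is immediate from the count $1+8+8+12 = 29$ together with Corollary \ref{cor:tableaustructure}, which places each family in a distinct region of the tableau. I therefore do not expect any serious obstacle beyond this routine verification.
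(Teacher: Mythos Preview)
Your proposal is correct and follows essentially the same route as the paper: reduce to the distinguished position Ab via a color permutation, then invoke Corollary~\ref{cor:tableaustructure} to read off the tableau locations of the varieties sharing no corner triples with $c$. The paper's proof is terser---it cites Lemma~\ref{lem:rowfix} explicitly to justify that the relabeling sends row $X$ to row $A$ and column $y$ to column $b$, and leaves the four-family bookkeeping implicit---but the underlying argument is the same as yours.
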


\begin{proof}
The variety Yx is the mirror image of Xy, so shares no corner triples with it.  The statement about the row and column follows from Lemmas \ref{cor:tableaustructure} and  \ref{lem:rowfix} once we note that there is a permutation of the color palette that moves Xy to the distinguished position.  
\end{proof}

\section{Building a $\cubed{2}$ solution: $\fr(2) = 24$}\label{sec:2frame}

In \cite{BerkoveEtAl} it was (incorrectly) conjectured that $\fr(2) = 23$.  Using properties of the tableau from Section \ref{section:Aut}, one can quickly construct a counterexample.  We first describe a way to keep track of the cubes in an arbitrary set using the tableau.  A set may contain many cubes of one variety; if variety Xy occurs $k$ times in the set, put $k$ into Xy's position in the tableau.  We call $k$ the \textit{repetition number} of the variety, and note that if $k \geq 8$, then there is a corner solution modeled after Xy.  If variety Xy is not part of the set, put a dot in its position in the tableau.   

Consider the following collection of $23$ cubes.  Within the five cubes of one row pentad, take seven copies of three varieties and one copy of the other two.   By Lemma \ref{lem:rowfix} there is a color automorphism that moves one variety with multiplicity seven to position Ab in the tableau.  By Lemma \ref{lem:stabilizer} there is another automorphism that puts the row A into the following form:
\begin{center}
\begin{tabular}{c c c c c c}
& 7 & 7 & 7 & 1 & 1 \\ 
\end{tabular}
\end{center}
This collection cannot be used to build any variety in the top row of the tableau since no two varieties in this row share corners.  One can't build a variety Xa in column a, since Lemma \ref{lem:cornerfacts} implies that the other varieties in row A can contribute at most two corners to the frame,  variety Xa shares no corners with variety Ax, and at least one variety only appears once in the collection.  In a similar way, a variety Xy in rows b through f shares no corners with variety Ay.  The remaining four cube varieties are insufficient to construct a corner solutions.  

We claim that $\fr(2) = 24$.  Our argument uses partitions of the set of $24$ cubes into $k$ subsets, where the size of a subset determines a variety's repetition number.  We write our partitions in decreasing order as like $54^3321^2$.  This tells us that the set of $24$ cubes is divided into eight distinct varieties, one variety with repetition number $5$, three with repetition number $4$, etc.   Once we fix a partition, we still need to assign varieties to the repetition numbers, and with a set of 30 possibilities the number of ways to do this is quite large.   For example, the number of cases we have to check to ensure that the partition $54^3321^2$ of $24$ cubes always has a solution is about $1.97 \times 10^{10}$.  Instead of checking them all, we determine conditions on the structure of the partition which will always yield a solution.  This is the focus of the following computer calculations and lemmas.

\begin{lemma}\label{lem:10cases}
Any collection which contains ten varieties always has a corner solution.
\end{lemma}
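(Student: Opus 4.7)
The plan is to combine the $S_6$ action on the tableau with a double-counting argument and Hall's matching theorem to locate a model variety $c$ admitting a corner solution, invoking the computer calculations alluded to at the start of the section for any remaining edge cases. First, by Lemma \ref{lem:rowfix} I would apply a color permutation to move one of the ten varieties in the collection to the distinguished position $Ab$, and then use the $S_4$-action from Lemma \ref{lem:stabilizer} to further normalize. Corollary \ref{cor:cornershare} provides the key structural fact: for each variety $c = Xy$, the nine varieties sharing no corners with $c$ are exactly $Yx$ together with row $X \setminus \{Xy\}$ and column $y \setminus \{Xy\}$, so the ``good'' set of varieties sharing at least one corner with $c$ has size $21$.

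Next I would double-count. Writing $m_v$ for the multiplicity of $v$ in the $24$-cube collection and $W(c)$ for the number of cubes in the collection whose variety shares a corner with $c$, symmetry of the ``shares a corner'' relation gives
\[
\sum_{c} W(c) \;=\; \sum_v m_v \cdot 21 \;=\; 21 \cdot 24 \;=\; 504,
\]
so some variety $c$ admits $W(c) \geq 17$: at least seventeen of the twenty-four cubes can be used as corner cubes for a solution modeled after $c$. I would then recast the existence of a corner solution of $c$ as a bipartite matching problem between the eight corner slots of $c$ and the usable cubes --- a cube of variety $v$ sharing two corners with $c$ fits into those two slots, while a cube of variety $c$ fits any slot --- and apply Hall's marriage theorem. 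With seventeen-plus cubes spread over the twenty ``corner-edge'' varieties of $c$ (whose structure is provided by Corollary \ref{cor:tableaustructure}), Hall's condition should hold in most configurations.

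The main obstacle is the configurations where the cubes concentrate on just a few of these corner-edge varieties, for instance when one or two bottleneck varieties carry a large share of the multiplicity, causing Hall's condition to fail for that particular $c$. In such cases I would retry the argument with a different candidate model --- typically a variety inside the collection itself, whose self-multiplicity lets it cover the bottleneck corners --- and finally invoke the computer check the authors have referenced to dispose of any remaining pathological configurations.
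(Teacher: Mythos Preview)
Your proposal has two genuine problems.

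First, you have misread the lemma's strength. The statement is not about a $24$-cube multiset that happens to span ten varieties; the paper proves (and later uses) the sharper fact that \emph{any ten distinct varieties, one cube of each}, already contain an eight-cube corner solution. Your double count $\sum_v m_v\cdot 21 = 21\cdot 24$ smuggles in the number $24$, which is extraneous to the lemma. If you run the same averaging argument with ten cubes you only get $\sum_c W(c)=210$ and hence some $c$ with $W(c)\ge 7$, one short of what is needed. So the double count, as stated, is not aimed at the right target and is too weak for the correct target.

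Second, and more seriously, even granting the $24$-cube reading, your argument is not a proof but a plan that explicitly terminates in ``invoke the computer check.'' You set up the bipartite matching, observe that Hall's condition ``should hold in most configurations,'' acknowledge that concentration of multiplicity can make it fail, propose trying a different model $c$, and then defer the residual cases to a machine verification. But the paper's own proof of this lemma \emph{is} precisely that machine verification: it builds the cube--corner bipartite graph and, for every ten-variety subset (reduced by a factor of about ten via the $S_6$ action by pinning two varieties to $Ab$ and $Ac$), loops over all thirty candidate models and tests for a size-eight matching. There is no analytic component. So your final fallback is the entire content of the paper's argument, and the preceding Hall/averaging layer never actually eliminates anything that the computer search doesn't already handle.

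If you want a route that genuinely differs from the paper, you would need to \emph{close} the Hall argument: show that for some model $c$ (perhaps one of the ten varieties themselves) every subset $A$ of corners has enough compatible varieties among the ten. The structural information in Corollary~\ref{cor:tableaustructure} about which corner pairs are realized by the twenty corner-sharing varieties is the right tool, but you have not carried that through.
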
        

\begin{proof}
The proof is by computer search, written in \textit{Mathematica}.  One builds the \textit{cube-corner} bipartite graph, where one set of vertices is the set of 30 possible colored cubes and the other set is the 40 possible corners.  Edges in the cube-corner graph connect varieties in the first vertex set with their corners in the second.   The algorithm proceeds by choosing a subset of ten varieties from the 30 possible.  Rather than work with the entire bipartite graph each time, for each subset the algorithm loops through all 30 possible corner solutions, building the subgraph of the cube-corner graph where one set of vertices is the subset of ten varieties, and other set is the set of eight corners from the variety of the potential corner solution.  \textit{Mathematica} then determines a maximal matching, where a matching of size eight means the distinguished corner solution can be assembled from the subset.
\end{proof}

At first glance, Lemma \ref{lem:10cases}'s proof requires checking ${30 \choose 10} \approx 3.0 \times 10^7$ cases, but we can use the tableau to reduce this number some.  Given ten distinct varieties in the tableau, at least two are in the same row.  This row can be moved to the top of the tableau using the $S_6$ action, and furthermore we can assume that the two varieties are Ab and Ac.  This leaves ${28 \choose 8} \approx 3.1 \times 10^6$ cases, about one tenth of the previous number.  This shorter computation still requires almost four hours of CPU time using a 2.70 GHz Intel Core i7-3740QM CPU with a 64-bit operating system and 8 GB of memory.  

The result in Lemma \ref{lem:10cases} implies that it is sufficient to consider subsets of $24$ cubes comprised of between four and nine different varieties.  This results in $354$ partitions to consider.  We use the next lemma, proven by computer search, to show that the vast majority of these contain a subset that forms a corner solution, regardless of the varieties which appear.

\begin{lemma}\label{lem:computedcases}
Given the following sets of cubes, one can always construct a corner solution.
\begin{itemize}  
\item 18 cubes consisting of two varieties with repetition number $7$ and any four other varieties. 
\item 16 cubes consisting of two varieties with repetition number $6$ and two varieties with repetition number $2$.
\item 19 cubes consisting of two varieties with repetition number $5$ and three varieties with repetition number $3$.
\item 16 cubes consisting of three varieties with repetition number $4$ and two varieties with repetition number $2$.
\item 18 cubes consisting of six varieties with repetition number $3$.  
\item 14 cubes consisting of seven varieties with repetition number $2$.
\end{itemize}
\end{lemma}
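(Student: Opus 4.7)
The proof I would write is essentially a computer search, in the spirit of Lemma \ref{lem:10cases}, but adapted to weighted collections of cubes. For each of the six sub-cases I would iterate over all possible assignments of cube varieties to the specified repetition slots and verify that every such assignment admits a corner solution. Concretely, given varieties $V_1, \ldots, V_k$ with repetition numbers $r_1, \ldots, r_k$, and a candidate target corner solution $T$ (one of the $30$ varieties), I would build the bipartite graph whose left vertices are the $V_i$ (with capacity $r_i$) and whose right vertices are the $8$ corners of $T$, with an edge whenever $V_i$ contains the required corner triple at $T$'s corner. Then I would call \textit{Mathematica}'s maximum-matching/max-flow routine and check whether the $8$ corners can be saturated. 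The sub-case is verified as soon as every assignment of varieties admits \emph{some} target $T$ for which the matching saturates.

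To make the enumeration tractable I would exploit the $S_6$ action on the tableau. By Lemma \ref{lem:rowfix} I can fix the first distinguished ``heavy'' variety (the rep-$7$ in Case 1, the rep-$6$ in Case 2, etc.) at position Ab of the tableau, and by Lemma \ref{lem:stabilizer} the second heavy variety can be taken to lie in one of the $S_4$-orbit representatives on the remaining $29$ positions. A preliminary loop through the $S_4$ action on the tableau would yield the small list of orbit representatives once and for all; this cuts each sub-case's search space by roughly a factor of $|S_6|/|S_4| = 30$ (or by $|S_6| = 720$ when only a single heavy variety is singled out, as in Cases 5 and 6). For the remaining lighter varieties in each case, I would enumerate over all $\binom{28}{m}$ subsets of the unused tableau positions, where $m$ is the number of lighter varieties.

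The main obstacle is computational volume, and the worst offender is Case 1: even after symmetry reduction there are on the order of $10^5$ residual subcases, each of which requires a matching computation over $30$ candidate targets. To keep the runtime comparable to that reported for Lemma \ref{lem:10cases}, I would precompute once and store the full cube--corner adjacency data (including correct orientations of corner triples, respecting the cyclic-but-not-mirror equivalence from Section~2), and then pass only index lookups into the matching routine. A further speedup comes from early termination: for each variety assignment I would iterate through target corner solutions $T$ in an order that prefers those sharing many corner triples with the heavy varieties, exiting as soon as one succeeds.

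A subtler correctness concern is orientation bookkeeping. The tableau's mirror symmetry (Lemma \ref{lem:mirror}) and Corollary \ref{cor:cornershare} should be used as sanity checks: for instance, in every sub-case the heavy varieties must not all lie in a single row, single column, or mirror-pair, since by Corollary \ref{cor:cornershare} such a configuration contributes zero common corners and would force the light varieties to supply all $8$ corners (and one can check by hand that the light-variety totals in each case are insufficient for this). I would have the program flag and separately report any such degenerate configuration, then confirm by hand that the matching succeeds nonetheless, before declaring the sub-case verified.
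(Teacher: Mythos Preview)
Your proposal is essentially the paper's own approach: the lemma is stated there as ``proven by computer search,'' with the same cube--corner bipartite matching idea already used for Lemma~\ref{lem:10cases}, adapted to weighted multisets exactly as you describe. Your use of the $S_6$ action and the stabilizer $S_4$ for symmetry reduction is also in the paper's spirit.

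One caveat: the reasoning in your final paragraph is off. If the heavy varieties all lie in a single row, Corollary~\ref{cor:cornershare} only says they share no corners \emph{with each other}; it does \emph{not} say they contribute nothing toward a corner solution modeled on some third variety $T$ outside that row, so it does not ``force the light varieties to supply all $8$ corners.'' Such same-row configurations are indeed the delicate ones (compare the $7{+}7{+}7{+}1{+}1$ counterexample at the start of Section~\ref{sec:2frame}), but the obstruction is more subtle than you state, and your proposed hand check would need a correct argument. This does not affect the validity of the exhaustive search itself.
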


We note that the corresponding Lemma 4.1 in \cite{BerkoveEtAl} erroneously claimed that one can always construct a corner solution from two varieties with repetition number $4$ and four with repetition number $2$.  The correct statement is above, that three varieties with repetition number $4$ and two with repetition number $2$ suffices.  Interested readers can contact the first author for copies of the code that was used to prove Lemmas \ref{lem:10cases} and \ref{lem:computedcases}.

Lemma \ref{lem:computedcases} immediately implies the existence of a corner solution in all but $22$ of the partitions.  (One partition for which the lemma does not apply, for example,  is $753^221^4$.)  The next two lemmas provide tools to handle many of these remaining cases. 

\begin{lemma}\label{lem:4of5}
Given a set of eight cubes consisting of four varieties from any pentad, each with repetition number $2$, one can always construct a corner solution.
\end{lemma}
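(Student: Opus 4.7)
The plan is to exploit the $S_6$ action on the tableau to reduce to a canonical case and then exhibit a single explicit corner solution. By Lemmas \ref{lem:rowfix} and \ref{lem:stabilizer}, the color-permutation action on the tableau is rich enough that, after relabelling, I may assume the pentad is row A and the one missing variety sits in position Af. Our collection then consists of two copies each of Ab, Ac, Ad, and Ae, and I will build a corner solution modeled after the mirror variety Fa.

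The counting key is that Fa shares exactly two corner triples with each of Ab, Ac, Ad, Ae. By Corollary \ref{cor:cornershare}, the five cubes of row A share no corner triples pairwise, so together they partition all $40$ corner triples into five blocks of eight; in particular Fa's eight corner triples split as a $5$-tuple $(a_b, a_c, a_d, a_e, a_f)$ with $\sum a_x = 8$, where $a_x$ counts the corner triples that Fa shares with Ax. Now Fa is the mirror of Af, so by Lemma \ref{lem:mirror} the two share all three opposite pairs, and the characterization following Lemma \ref{lem:cornerfacts} then forces $a_f = 0$. For each $x \in \{b,c,d,e\}$, the mirror Xa of Ax lies in column a and already absorbs all three opposite pairs of Ax; since column a is itself a pentad (each duad appearing exactly once), no other variety in column a, and in particular not Fa, can share an opposite pair with Ax. Lemma \ref{lem:cornerfacts}(1) then forces $a_x \in \{0,2\}$, and since four values from $\{0,2\}$ can only sum to $8$ as $2+2+2+2$, each $a_x$ equals $2$.

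With this in hand the assembly is immediate: for each of the eight corner positions of a corner solution modeled after Fa there is a unique $x \in \{b,c,d,e\}$ such that Ax carries the required corner triple, and using both copies of each Ax precisely exhausts our set. I do not foresee real difficulty in the orientation-matching step, since any cube can be rotated to present any of its corner triples at any prescribed position. The only delicate point, and really the whole idea of the proof, is the structural insight that taking the model variety to be the mirror of the missing pentad element is exactly what makes the ``$0$ or $2$'' dichotomy of Lemma \ref{lem:cornerfacts} rigid enough to force the balanced distribution $(2,2,2,2)$.
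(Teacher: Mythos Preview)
Your proof is correct and follows essentially the same strategy as the paper: reduce to row~A with Af missing, then choose a model variety whose eight corners are covered two at a time by Ab, Ac, Ad, Ae (distinctness coming from the fact that row~A varieties share no corner triples pairwise). The only cosmetic differences are the choice of model---you take Fa, the mirror of the missing variety, while the paper takes any of Bf, Cf, Df, Ef---and the level of detail: you derive the ``exactly two corners each'' fact via the partition of all $40$ corner triples together with the pentad structure of column~a, whereas the paper simply invokes Corollary~\ref{cor:cornershare}.
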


\begin{proof}
Taking mirror images if necessary, we can assume the pentad is a row pentad.  Then by Lemmas \ref{lem:rowfix} and \ref{lem:stabilizer} we can assume that the varieties are in the top row (row A) and in columns b through e of the tableau, like so:
\begin{center}
\begin{tabular}{c c c c c c}
& 2 & 2 & 2 & 2 & $ \cdot$ \\ 
\end{tabular}
\end{center}
By Corollary \ref{cor:cornershare}, each of these varieties shares exactly two corners with varieties Bf, Cf, Df, or Ef.  Furthermore, the corners are distinct, since the top row is a pentad.  So there are at least four possible corner solutions.
\end{proof}

\begin{lemma}\label{lem:technical}
Given a decreasing partition $(a_1, a_2, \ldots, a_n)$ with $a_2 \geq 4$, if $a_1 = 7$ and $a_2 + n \geq 13$, then there is a subset of $8$ cubes that forms a corner solution.  The same is true if $a_1 = 6$ and $a_2 + n \geq 14$.
\end{lemma}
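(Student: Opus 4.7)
The plan is to case-split on whether some variety $V_i$ with $i \geq 2$ shares a corner triple with $V_1$. In the ``easy'' case where such a $V_i$ exists, I would build a $V_1$-model corner solution by placing all $a_1$ copies of $V_1$ in $a_1$ positions and filling the remaining $8 - a_1 \in \{1, 2\}$ positions with cubes from $V_i$ (or other good varieties) whose shared corners with $V_1$ match those positions. For $a_1 = 7$ this needs only one good $V_i$-cube; for $a_1 = 6$ the slightly stronger bound $a_2 + n \geq 14$ ensures enough room, either via a good variety with repetition $\geq 2$, or via two good varieties covering distinct corners.

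In the ``hard'' case every $V_i$ ($i \geq 2$) shares no corners with $V_1$, so by Corollary \ref{cor:cornershare} these varieties all lie in a fixed 9-element subset of the tableau. Normalizing via the $S_6$ action I take $V_1 = $ Ab and this bad set to be $\{\text{Ba, Ac, Ad, Ae, Af, Cb, Db, Eb, Fb}\}$; in particular $n \leq 10$, and since the ``shares no corners'' relation is symmetric, the $V_1$ cubes are useless for any non-$V_1$-model solution. I would therefore pass to a $V_2$-model corner solution built entirely from $V_2, \ldots, V_n$. To see this is feasible I split further on the position of $V_2$: if $V_2 = $ Ba then every remaining $V_i$ is ``good'' for $V_2$ (because $V_2$'s bad set meets $V_1$'s bad set only at $V_1$); if $V_2$ sits in $V_1$'s row or column, then by the row-column symmetry of the tableau I can assume $V_2 = $ Ac, and a quick tableau check shows that at most three varieties in our collection (namely Ad, Ae, Af) are useless for a $V_2$-model. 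In either subcase the hypothesis $a_2 + n \geq 13$ forces enough useful cubes to remain.

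The final step --- matching the useful non-$V_2$ cubes to the $8 - a_2$ corners of $V_2$ not occupied by $V_2$ cubes --- I expect to be the main obstacle. Each useful non-$V_2$ cube covers exactly two specific corners of the $V_2$-model, giving a 2-regular bipartite incidence structure between useful varieties and corners of $V_2$. I would invoke Lemma \ref{lem:4of5} when four useful varieties from one pentad each have repetition $\geq 2$, and otherwise appeal to a Hall-type matching or pigeonhole argument to complete the corner solution. The threshold $a_2 + n \geq 13$ is precisely calibrated so that this last step always succeeds; the $a_1 = 6$ case is handled by the same blueprint with one additional corner to patch, explaining the strengthened bound $a_2 + n \geq 14$.
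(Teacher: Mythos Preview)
Your outline matches the paper's proof almost exactly: place $V_1$ at Ab, handle the easy case by a $V_1$-model, and in the hard case (all other varieties in $\{\text{Ba},\text{Ac},\text{Ad},\text{Ae},\text{Af},\text{Cb},\text{Db},\text{Eb},\text{Fb}\}$) switch to a $V_2$-model with the two sub-cases $V_2=\text{Ac}$ and $V_2=\text{Ba}$, counting at most three (respectively zero) useless varieties. The one point you leave open---the matching of $k\le 4$ useful varieties to $k$ distinct corners of $V_2$---is exactly what the paper pins down: viewing each useful variety as an edge on the $8$ corners of $V_2$, those edges are either cube edges or long diagonals, so the resulting multigraph has girth $\ge 4$, and Hall's condition is then immediate for any $k\le 4$ edges (your invocation of Lemma~\ref{lem:4of5} is not needed here---the paper reserves it for the later ad~hoc partitions).
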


\begin{proof}
For simplicity, we will refer to the variety with repetition number $a_i$ as variety $i$.  By using the lemmas from Section \ref{section:Aut}, we can assume that variety 1, which appears $7$ times, is in position Ab in the tableau.  Then by Corollary \ref{cor:cornershare}, there is automatically a solution unless the remaining cube varieties are among the nine in row A, column b, and the mirror variety Ba.  Therefore, variety $2$ is either in the same pentad as $c$, or is $c^*$.  By using mirror symmetry and/or a color automorphism, we may assume that variety $2$ is in position Ac or Ba as in the diagrams below.  Boxes represent the cube varieties that don't automatically result in a corner solution as noted in Corollary \ref{cor:cornershare}.

\medskip

\noindent\begin{minipage}[b]{.5\textwidth}
\begin{center}
\begin{tabular}{c c c c c c}
             & $a_1$  & $a_2$    & $\Box$   & $\Box$   & $\Box$ \\ 
$\Box$  &             & $\cdot$  & $\cdot$  & $\cdot$ & $\cdot$ \\ 
$\cdot$ & $\Box$ &               & $\cdot$  & $\cdot$ & $\cdot$ \\ 
$\cdot$ & $\Box$ & $\cdot$ &                & $\cdot$ & $\cdot$ \\ 
$\cdot$ & $\Box$ & $\cdot$ & $\cdot$  &               & $\cdot$ \\
$\cdot$ & $\Box$ & $\cdot$ & $\cdot$  & $\cdot$  & 
\end{tabular}
\vskip .1in

Case Ac
\end{center}
\end{minipage} 
\hfill
\begin{minipage}[b]{.5\textwidth}
\begin{center}
\begin{tabular}{c c c c c c}
             & $a_1$  & $\Box$    & $\Box$   & $\Box$   & $\Box$ \\ 
$a_2$   &             & $\cdot$  & $\cdot$  & $\cdot$ & $\cdot$ \\ 
$\cdot$ & $\Box$ &               & $\cdot$  & $\cdot$ & $\cdot$ \\ 
$\cdot$ & $\Box$ & $\cdot$ &                & $\cdot$ & $\cdot$ \\ 
$\cdot$ & $\Box$ & $\cdot$ & $\cdot$  &               & $\cdot$ \\
$\cdot$ & $\Box$ & $\cdot$ & $\cdot$  & $\cdot$  & 
\end{tabular}
\vskip .1in

Case Ba 
\end{center}
\end{minipage}


\smallskip

\noindent \textbf{Case Ac}:  Variety $2$ shares two corners with variety $1^*$ and all the varieties in the column's boxed positions.  We claim that if $k \leq 4$ of these varieties are represented, then they can be used to construct $k$ distinct corners of a corner solution modeled after variety $2$.  Since there are only five slots in the top row of the tableau, we can always build a corner solution when $a_2 + (n - 5) \geq 8$.  

The proof of the claim is contained in Lemma 4.4 in \cite{BerkoveEtAl}, but here's a direct proof.  Consider a multigraph whose vertices are the corners of variety $2$.  Given a variety that shares two corners with variety $2$, add an edge to the graph between the vertices representing those corners.  By the discussion after Lemma  \ref{lem:cornerfacts}, graph edges are either edges of variety $2$ or long diagonals (the latter may occur twice).  The shortest possible cycle made up of these edges has length $4$.

\smallskip

\noindent \textbf{Case Ba}:  Here, variety $2$ shares two corners with every boxed variety, so $a_2 + (n - 2) \geq 8$ varieties guarantees a corner solution.  This condition holds when $a_2 + n \geq 13$.

\smallskip

Finally, if $a_1$ = 6, then one variety with repetition number $1$ can be in the dotted positions in the tableau without yielding a corner solution.  Now there are $7$ distinct corners available to build a corner solution modeled after variety 1, and we're in the prior case.
\end{proof}

We apply Lemma \ref{lem:technical} to the $22$ partitions, and end up with eight remaining cases to consider, listed by increasing number of varieties:
\[ 
75^31^2, \  75^241^3, \ 65^31^3, \ 743^321^2, \ 65^241^4, \ 5^41^4, \ 73^421^3, \ 5^341^5
\]
These cases can all be handled by ad hoc methods, although broadly speaking the techniques are similar to those in the proof of Lemma \ref{lem:technical}.  One places variety $1$ is in position Ab.  Although variety $2$ can be in either position Ac or Ba, we usually only need to check position Ac, since that case, as in Lemma \ref{lem:technical}, is the most involved.  Also, Lemma \ref{lem:4of5} implies that if there are four varieties in the same pentad with repetition number greater than 1 then there is a solution, so we avoid that situation too.  Keeping these observations in mind, we sketch arguments for the three most subtle of the remaining cases.  The other five cases are similar, but easier.

\begin{enumerate}
\item The partition $743^321^2$: Let $a_1 = 7$ and $a_2 = 4$ as in the Case Ac.  Using Lemma \ref{lem:4of5}, we can assume that there is one variety with repetition number $3$ in the row pentad and the other two varieties with repetition number $3$ are in the column pentad, say Cb and Db.  Variety $2$ also shares two corners with Cb and Db, and since the varieties in the column form a pentad these corners are distinct.  That is, these two varieties contribute four corners to a corner solution modeled after variety $2$.  
\item The partition $73^321^3$: This case is similar to the one above, although there are essentially two cases to consider, representatives of which are shown below.  The final variety that occurs once can be in either boxed position.
\vskip .1in
\noindent\begin{minipage}[b]{.4\textwidth}
\begin{center}
\begin{tabular}{c c c c c c}
             &     $7$  &    $3$     &     $3$    &     $1$    & $\Box$ \\ 
   $2$    &             & $\cdot$  & $\cdot$  & $\cdot$ & $\cdot$ \\ 
$\cdot$ &  $3$   &                 & $\cdot$  & $\cdot$ & $\cdot$ \\ 
$\cdot$ &  $3$   & $\cdot$   &                & $\cdot$ & $\cdot$ \\ 
$\cdot$ &  $1$   & $\cdot$ & $\cdot$  &               & $\cdot$ \\
$\cdot$ & $\Box$ & $\cdot$ & $\cdot$  & $\cdot$  & 
\end{tabular}
\vskip .1in

Case 1
\end{center}
\end{minipage} 
\hfill
\begin{minipage}[b]{.4\textwidth}
\begin{center}
\begin{tabular}{c c c c c c}
             &     $7$  &    $3$     &        $3$   &    $1$     & $\Box$ \\ 
   $3$    &             & $\cdot$ & $\cdot$    & $\cdot$  & $\cdot$ \\ 
$\cdot$ &  $3$    &               & $\cdot$    & $\cdot$  & $\cdot$ \\ 
$\cdot$ &  $2$    & $\cdot$  &                 & $\cdot$  & $\cdot$ \\ 
$\cdot$ &  $1$    & $\cdot$  & $\cdot$    &               & $\cdot$ \\
$\cdot$ & $\Box$ & $\cdot$ & $\cdot$   & $\cdot$  & 
\end{tabular}
\vskip .1in

Case 2
\end{center}
\end{minipage}

\noindent In both cases, variety $2$ in position Ac has repetition number $3$, and variety Ba has repetition number $2$ or $3$.  In both cases, the three varieties in the same column as variety 1 are part of a pentad.  Therefore, those three varieties can contribute $5$ corners towards a corner solution modeled after variety $2$.

\item The partition $5^341^5$: Variety $2$ has repetition number $5$, and there must be at least three varieties in the column pentad of variety 1.  These three contribute three distinct corners to a corner solution modeled after variety $2$.
\end{enumerate}  

Since there are corner solutions for all possible partitions we have the main result of this section and the first part of Theorem A.

\begin{theorem}[Theorem A, part 1]\label{thm:2soln}
Given any set of $24$ colored cubes, there is always a subset from which one can construct a corner solution.  Consequently, $\fr(2) = 24$.
\end{theorem}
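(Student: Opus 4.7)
The plan is to prove the theorem in two halves: first the lower bound $\fr(2) \geq 24$, then the upper bound $\fr(2) \leq 24$. The lower bound is immediate from the explicit collection of 23 cubes described just before the theorem: row A populated with multiplicities $(7,7,7,1,1)$ (after applying the $S_6$ action guaranteed by Lemmas \ref{lem:rowfix} and \ref{lem:stabilizer}) cannot produce a corner solution, as already argued using Corollary \ref{cor:cornershare} and Lemma \ref{lem:cornerfacts}. So all the real work lies in the upper bound.

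For the upper bound, I would organize the proof around integer partitions of $24$. First, by Lemma \ref{lem:10cases}, any collection of 24 cubes using at least 10 distinct varieties already contains a corner solution. So it suffices to examine partitions $24 = a_1 + a_2 + \cdots + a_n$ with $a_1 \geq a_2 \geq \cdots \geq a_n \geq 1$ and $4 \leq n \leq 9$. A direct count gives 354 such partitions, and my next move is to apply Lemma \ref{lem:computedcases} to cross off every partition that contains one of the listed ``always solvable'' sub-collections; this should dispatch all but $22$ partitions.

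The reduction of these 22 residual partitions is the core of the argument. My main tool will be Lemma \ref{lem:technical}: whenever $a_1 \in \{6,7\}$ and the partition satisfies the stated size inequalities, a corner solution exists, cutting the list down to $8$ remaining partitions, namely $75^31^2,\, 75^241^3,\, 65^31^3,\, 743^321^2,\, 65^241^4,\, 5^41^4,\, 73^421^3,\, 5^341^5$. For each of these I plan to pin a variety with the largest repetition number to position Ab via Lemmas \ref{lem:rowfix} and \ref{lem:stabilizer}, and then argue by case on whether the next-largest variety sits at Ac or at Ba (using mirror symmetry and a palette permutation to reduce to these two configurations). Corollary \ref{cor:cornershare} tells me exactly which varieties share corners with the candidate model, and Lemma \ref{lem:4of5} lets me immediately finish whenever four varieties with multiplicity $\geq 2$ lie in one pentad. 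The sample arguments given in the text for $743^321^2$, $73^421^3$, and $5^341^5$ serve as templates: identify a distinguished ``model'' variety with enough copies to supply most corners, then use the pentad structure of a row or column of the tableau to furnish the remaining corners from the other varieties in the collection.

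The hard part is the ad hoc handling of the last eight partitions. Lemmas \ref{lem:10cases}, \ref{lem:computedcases}, \ref{lem:4of5}, and \ref{lem:technical} neatly absorb everything structural, but these remaining cases are ``just barely'' tight: one frequently has only a handful of varieties that can legitimately serve as corner-suppliers, and their placement in the tableau matters. I expect the trickiest bookkeeping to arise when $a_1 = 5$, because then no single variety has enough copies to dominate the corner count and one must extract corners from multiple pentad neighbors simultaneously. In each such case, my strategy is to show that the column pentad of the model variety must contain enough cubes to contribute the deficit in distinct corners, exploiting the fact that cubes in a common pentad share distinct corner pairs with the model (a consequence of Lemma \ref{lem:cornerfacts} and Corollary \ref{cor:tableaustructure}). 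Once all eight cases are verified, both inequalities combine to give $\fr(2) = 24$.
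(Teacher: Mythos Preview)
Your proposal is correct and follows essentially the same route as the paper: the lower bound via the $(7,7,7,1,1)$ row-pentad example, the reduction to partitions of $24$ with $4$--$9$ parts via Lemma~\ref{lem:10cases}, the elimination of all but $22$ partitions by Lemma~\ref{lem:computedcases}, the further cut to the same eight residual partitions by Lemma~\ref{lem:technical}, and finally the ad~hoc pentad/tableau arguments (with Lemma~\ref{lem:4of5} as a shortcut) for those eight. Your outline of the endgame---pin the most repeated variety at~Ab, split on whether the second sits at~Ac or~Ba, and harvest distinct corners from the column pentad---is exactly the template the paper uses.
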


\section{Building a $\cubed{3}$ solution:  $\fr(3) = 24$}\label{sec:3frame}

In contrast to the $\cubed{2}$ case, determining $\fr(n)$ for $n > 2$ requires knowledge of more than the corner solution.  This is reflected in the proofs of these cases, which have a different flavor than the case $n = 2$.  In order to determine the way to place cubes into the interior of the frame, we need to know a bit more about how cubes varieties are related to each each.   This is the subject of the next few results, which describe how to construct partial frames given cubes of a particular type, how to place cubes  into edges of the frame, and how cubes that share an opposite pair are related.  We use these results both in this section and in Section \ref{sec:framebuild}.

\begin{lemma} \label{lem:extension}
Given a corner solution and $k(n-2)$ cubes that share $k$ edge pairs with the corner solution, then it is possible to place all  $k(n-2)$ cubes into the $n$-frame.
\end{lemma}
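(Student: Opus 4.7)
The plan is to reduce the lemma to the purely local fact that a non-corner frame cube is constrained only by the colors of its two outward-facing sides. First I would fix the corner solution, extract its model variety $c$, and note that the $12$ edges of the $n$-cube inherit from $c$ twelve distinct edge pairs, namely the $12$ adjacent pairs of $c$. I would then parse the hypothesis so that the $k(n-2)$ given cubes split into $k$ batches of size $n-2$, the cubes in batch $i$ all carrying the same shared edge pair $E_i$ as one of their adjacent pairs and targeting the specific frame edge $L_i$ whose edge pair is $E_i$.

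The next step is to justify the placement of a single cube. A cube $d$ with adjacent pair $\{x,y\}$ has a unique unit edge where its $x$-face and $y$-face meet. Because the rotation group of the cube acts transitively on pairs consisting of an oriented edge together with a choice of exterior normal, $d$ admits an orientation in which its $\{x,y\}$-edge lies along any prescribed edge $L_i$ of the $n$-cube, with $x$ and $y$ on the two prescribed outward-facing big faces. Hence every cube in batch $i$ is usable in every interior slot of $L_i$.

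Finally I would carry out the full placement: for each $i$, drop the $n-2$ cubes of batch $i$ into the $n-2$ interior slots of $L_i$ in any order. Since distinct batches occupy disjoint edges, and since the remaining four faces of every placed cube either abut another frame cube along the edge (with no frame-level matching condition) or point toward the eventual interior of the $n$-cube, no conflicts arise and all $k(n-2)$ cubes fit. I do not anticipate a serious obstacle; the only point requiring care is the orientation claim, and the real content of the lemma is the pleasant observation that building the edges of a frame decouples completely into per-edge subproblems, so later arguments need only track the multiplicity of each relevant adjacent pair in the cube supply.
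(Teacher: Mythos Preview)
You have misread the hypothesis. The phrase ``$k(n-2)$ cubes that share $k$ edge pairs with the corner solution'' does not mean the collection comes pre-sorted into $k$ batches of size $n-2$, each batch assigned to a particular edge. It means that \emph{each individual cube} in the collection shares $k$ adjacent pairs with the model variety of the corner solution; by Lemma~\ref{lem:sidefacts} the only possibilities are $k=9,10,12$, and the paper's proof invokes exactly this (``since any single cube shares at least $k$ edges with the corner solution''). Under your reading the lemma is a triviality and would be useless for the applications in Sections~\ref{sec:3frame} and~\ref{sec:framebuild}, where one is handed an unstructured pile of cubes each sharing at least nine edges with the frame and must argue that they all fit.

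With the correct hypothesis your per-edge decomposition no longer works, because different cubes may share \emph{different} sets of $k$ edges with the corner solution, and there is no a~priori batching. The actual argument is greedy: place the cubes one at a time in any order. After $m<k(n-2)$ cubes have been placed, at most $\lfloor m/(n-2)\rfloor\le k-1$ of the twelve frame edges are full, so at least $13-k$ edges still have an open slot. The next cube shares $k$ edges with the frame, hence at least one of its shared edges is not yet full, and it can be inserted there. Repeating until all $k(n-2)$ cubes are placed completes the proof. The orientation observation you made is correct and is implicitly used, but the substantive content is this pigeonhole/greedy step, which your proposal omits.
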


\begin{proof}
In light of Lemma \ref{lem:sidefacts}, $k = 9, 10$, or $12$.  The result is clear when $k = 12$.  If $k < 12$, assume that in the construction of the frame, fewer than $k$ edges are complete.  Since any single cube shares at least $k$ edges with the corner solution, it can be fit into the cube.  This process can always be repeated when fewer than $k$ edges are complete in the frame, which requires $k(n-2)$ cubes. 
\end{proof}

The next results examine the situation where one has a collection of cubes that all share one opposite pair.

\begin{lemma} \label{lem:sixvar}
There are exactly six distinct cube varieties that share any opposite pair.  Given any three of the six, all adjacent pairs on one variety can be found on either one or both of the remaining two varieties.
\end{lemma}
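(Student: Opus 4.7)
The plan is to prove the two statements separately, starting with the count. Fix an opposite pair, say $\{x,y\}$. By property (2) of the tableau, every unordered color pair is an opposite pair in exactly one cube per row, and since the tableau has six rows, there are exactly six varieties with $\{x,y\}$ as an opposite pair. A direct combinatorial check confirms this: the remaining four colors must be partitioned into two opposite pairs, which can be done in three ways (one per syntheme of the four-color subset), and each such choice admits exactly two realizations on the cube, differing by a mirror reflection. Either way, the count is $3 \cdot 2 = 6$.

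For the second statement, I would first describe the adjacent pairs on any cube with opposite pair $\{x,y\}$. Since $x$ and $y$ are opposite only to each other, each of the eight pairs of the form $\{x,z\}$ or $\{y,z\}$ with $z$ a side color is automatically adjacent, and these eight pairs are identical across all six varieties. The remaining four adjacent pairs lie among the four side colors and are precisely the four duads not used as opposite pairs in that cube. Hence the six varieties split into three mirror pairs classified by the choice of syntheme on the four side colors; call the three corresponding side-adjacent-pair sets $A$, $B$, $C$, each of size four, with mirror partners sharing the same set.

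The case analysis then splits on how three chosen varieties distribute among these classes. If two of the three lie in the same mirror class, I pick one of them; its entire adjacent pair set is duplicated on its mirror partner within the triple, so ``one'' of the remaining two suffices. Otherwise the three varieties represent three distinct synthemes, and $A$, $B$, $C$ appear as the three complements of synthemes inside the six side duads. Since the three synthemes of a four-color set partition the six duads, any two synthemes are disjoint, and therefore any two of $A$, $B$, $C$ have union equal to all six side duads. Consequently, for the chosen variety, its side adjacent pairs lie in the union of the other two, while the common eight pairs are covered automatically, so ``both'' remaining varieties are needed. This also explains the ``one or both'' phrasing in the statement.

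The main obstacle is really just bookkeeping: sorting the six varieties into mirror classes by their side-syntheme and recognizing the structural fact that on four colors the three synthemes partition the six duads, which is what makes the three-distinct-synthemes case close cleanly without any missing duads.
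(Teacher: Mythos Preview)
Your proof is correct and follows the same case split as the paper (two of the three are mirror images vs.\ all three synthemes are distinct), but handles the second case by a different mechanism. The paper dispatches the ``three distinct synthemes'' case via Lemma~\ref{lem:sidefacts} and inclusion--exclusion: two varieties sharing one opposite pair share exactly ten adjacent pairs, so their union contains $12+12-10=14$ adjacent pairs, which is every pair except the common opposite one. You instead peel off the eight adjacent pairs involving a color of the fixed opposite pair as automatically common, and then argue directly on the four remaining ``side'' adjacent pairs using the fact that the three perfect matchings on four colors partition the six side duads, so the complements of any two of them cover everything. Your route is more explicit and self-contained (it does not appeal to Lemma~\ref{lem:sidefacts}), while the paper's route is a one-line count once that lemma is in hand; both yield the same ``one or both'' dichotomy.
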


\begin{proof}
Once one opposite pair has been determined, there are three ways to partition the remaining four colors into two opposite pairs, and there are two mirror varieties, $c$ and $c\mir$, for each set of three opposite pairs.  

For the second statement in the lemma, if two of the varieties are mirror images, the result follows since these varieties have the same adjacent pairs by Lemma \ref{lem:sidefacts}.   Otherwise, Lemma \ref{lem:sidefacts} implies that two distinct varieties share ten adjacent pairs, so the two together have $24 - 10 = 14$ distinct adjacent pairs.  Since all the cubes share one opposite pair, these represent all possibilities.
\end{proof}

\begin{lemma} \label{lem:opppairfacts} 
Fix $c$, one of the six cube varieties that share one opposite pair.  The variety $c$ shares no corners with $c\mir$, and shares exactly two unique corners with each of the other four varieties.
\end{lemma}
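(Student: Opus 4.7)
The plan is to handle the two claims separately, leveraging the detailed characterization of how $c$ relates to the other $29$ cube varieties (the discussion following Lemma \ref{lem:cornerfacts}). For the first claim, $c$ and $c^*$ are mirror images so by Lemma \ref{lem:mirror} they agree on all three opposite pairs. Lemma \ref{lem:cornerfacts}(3) forces them to share either $0$ or $8$ corner triples, and the characterization after that lemma explicitly identifies $c^*$ as the unique variety sharing three opposite pairs but no corner triples with $c$. So this part is immediate.

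For the second claim, Lemma \ref{lem:cornerfacts}(2) guarantees each of the four remaining varieties shares exactly two corner triples with $c$; the substance of the lemma is that the $4 \times 2 = 8$ corners, taken over the four varieties, exhaust the corner triples of $c$ without repetition. To see this, I would use the edge-flip description: the twelve varieties that share exactly one opposite pair with $c$ arise from exchanging some adjacent pair of colors on $c$. Fix the opposite pairs of $c$ as $\{1,2\}, \{3,4\}, \{5,6\}$ with $\{1,2\}$ the common pair among the six varieties in play. An edge flip swapping colors $a$ and $b$ preserves $\{1,2\}$ as an opposite pair precisely when $a,b \in \{3,4,5,6\}$; excluding the opposite pairs $\{3,4\}$ and $\{5,6\}$ leaves exactly the four adjacent pairs $\{3,5\}, \{3,6\}, \{4,5\}, \{4,6\}$, corresponding to the four non-mirror varieties in the lemma.

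The final step is the geometric observation that pins down which two corners survive each edge flip. Swapping the two adjacent colors $a$ and $b$ (whose faces meet along an edge $e$ of $c$) fixes exactly those corner triples that involve neither face $a$ nor face $b$, i.e.\ the two corners lying on the edge $e'$ of $c$ where the opposite faces of $a$ and $b$ meet. Applied to the four flips above, the fixed edges are $\{4,6\}, \{4,5\}, \{3,6\}, \{3,5\}$. Since the eight corners of $c$ are the eight triples $(x,y,z)$ with $x \in \{1,2\}$, $y \in \{3,4\}$, $z \in \{5,6\}$, a direct check shows the endpoints of these four edges partition the eight corners of $c$ into four disjoint pairs, which is exactly the statement of the lemma.

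The main obstacle is the bookkeeping in this last geometric step: one must verify that the map sending the swapped edge $\{a,b\}$ to its "opposite edge" (the edge joining the faces opposite to $a$ and $b$) restricts to an involution on the set of four adjacent pairs in $\{3,4,5,6\}$, and that the endpoints of these four edges account for all eight corners of $c$ without overlap. Once this is set up, the disjointness follows by inspection.
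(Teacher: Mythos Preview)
Your argument is correct. Both claims are handled soundly: the mirror case follows from Lemma~\ref{lem:cornerfacts}(3) and the characterization after it, and for the four remaining varieties your identification of them as the edge flips $\{3,5\},\{3,6\},\{4,5\},\{4,6\}$ is right, as is the key observation that swapping adjacent colors $a,b$ preserves precisely the two corners on the edge where the faces opposite $a$ and $b$ meet. The partition of the eight corners then drops out.

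The paper's proof reaches the same conclusion by a more bare-handed route: it fixes the shared opposite pair as $\{5,6\}$, encodes each of the six varieties by the cyclic order of the four remaining colors around the girth, writes these out explicitly---$(1234)$ for $c$ and $(2134),(1324),(1243),(4231),(4321)$ for the others---and simply reads off that each of the first four shares exactly one girth adjacency with $c$, namely $\{3,4\},\{4,1\},\{1,2\},\{2,3\}$ respectively. These are the four girth edges, so the associated corner pairs partition the eight corners of $c$. Your approach is more structural, leaning on the edge-flip description already developed after Lemma~\ref{lem:cornerfacts} and the ``flip $\{a,b\}$ fixes the opposite edge $\{\bar a,\bar b\}$'' principle; the paper's is a direct enumeration that avoids invoking that earlier discussion. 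Both are short, but yours integrates better with the surrounding machinery, while the paper's makes the uniqueness of the shared corners visible at a glance from the list of girth edges.
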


\begin{proof}
Assume that the opposite pair are the colors $5$ and $6$, and that the colors around the girth of $c$ are given by the cyclic permutation $(1234)$.  The cyclic permutations for the other five varieties are $(2134)$, $(1324)$, $(1243)$, $(4231)$, and $(4321)$.  In the girth, the first four share exactly one adjacent pair with $c$, the edges $\{3,4\}$, $\{4,1\}$, $\{1,2\}$, and  $\{2,3\}$ respectively.  These give rise to a pair of corners that match $c$'s.  The last cube is $c\mir$.
\end{proof}

\begin{corollary} \label{cor:makecorner}
Take four copies of $c$ and two copies each of two other varieties that share the same opposite pair.  If the two other varieties are not $c\mir$, then the eight cubes can be assembled into a corner solution modeled on $c$.  There is a similar result using six copies of $c$ and two cubes that are not of the same variety as $c\mir$.
\end{corollary}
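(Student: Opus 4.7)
The plan is to extract, from the proof of Lemma~\ref{lem:opppairfacts}, the sharper structural fact that the eight corners of $c$ are partitioned into four pairs by the four non-$c^*$ varieties sharing $c$'s opposite pair. Working in the notation of that lemma (opposite pair $\{5,6\}$, girth $(1234)$), the variety associated with girth edge $\{i, i{+}1\}$ shares with $c$ exactly the two corner triples $(i, i{+}1, 5)$ and $(i, i{+}1, 6)$; running over the four girth edges produces eight distinct corner triples, i.e.\ all of $c$'s corners. So each non-$c^*$ variety corresponds to a disjoint pair of corner positions in any corner solution modeled on $c$.

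For the first statement, I would let $v_1, v_2$ be the two non-$c^*$ varieties in the collection with corresponding corner pairs $P_1, P_2$. Since $v_1 \neq v_2$, the pairs are distinct parts of the partition and hence disjoint, together specifying four corner positions. Place the two copies of $v_i$ at the two positions of $P_i$ (legal by Lemma~\ref{lem:opppairfacts}) and use the four copies of $c$ to fill the remaining four positions (legal since $c$ matches its own corner triples at every corner).

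For the second statement, the two auxiliary cubes are either two copies of a single variety $v$ (place them at the two positions of $P_v$), or one copy each of distinct varieties $v, w$ (pick any one position from $P_v$ and any one from $P_w$---automatically distinct by disjointness of the parts of the partition). The six copies of $c$ then fill the remaining six positions. The only point that requires any care is ensuring no two cubes are assigned to the same corner position, and the partition observation of the first paragraph makes this automatic; no case analysis beyond the single same-variety/distinct-variety split is needed.
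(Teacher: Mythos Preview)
Your proof is correct and is exactly what the paper intends: the corollary is stated without proof because it is meant to be immediate from Lemma~\ref{lem:opppairfacts}, and you have simply made that implication explicit by observing that the word ``unique'' there yields a partition of the eight corners of $c$ into four disjoint pairs indexed by the four non-$c\mir$ varieties. Nothing further is needed.
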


When a cube and its mirror image occur with large repetition number we can say even more.

\begin{lemma} \label{lem:two7s}
Let $S$ be a set consisting of seven cubes each of varieties $c$ and $c\mir$ along with one cube of any other variety.  Then $S$ has a subset of eight cubes that forms a corner solution modeled on either variety $c$ or variety $c\mir$.
\end{lemma}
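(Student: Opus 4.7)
The plan is to classify the extra variety $d$ by its corner-sharing relationship with $c$, and show that in every case we can form the required corner solution by choosing whether to model it on $c$ or on $c^*$.

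Let $d$ denote the single cube of variety different from $c$ and $c^*$. By the discussion following Lemma \ref{lem:cornerfacts}, the $29$ varieties distinct from $c$ fall into four families: (i) the single variety $c^*$; (ii) the $8$ varieties obtained from $c$ by cyclically permuting a corner, each sharing two corner triples with $c$; (iii) the $8$ varieties obtained from those in (ii) by a mirror image, each sharing no corner triples with $c$; and (iv) the $12$ varieties obtained from $c$ by an edge flip, each sharing two corner triples with $c$. Varieties in (ii) and (iv) share a corner with $c$; only $c^*$ and the varieties in (iii) do not.

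If $d$ lies in family (ii) or (iv), pick a corner triple $T$ shared by $c$ and $d$. Since $c$ realizes each of its $8$ corner triples at a unique position, we build a corner solution modeled on $c$ by placing $d$ at the corner whose required triple is $T$ and filling the remaining seven positions with the seven available copies of $c$.

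Suppose instead that $d$ lies in family (iii), so $d = m(\sigma(c))$ for some cyclic corner permutation $\sigma$. Then $d^* = \sigma(c)$ lies in family (ii) relative to $c$ and shares two corner triples with $c$. Since mirroring is a bijection that sends each corner triple of a variety to its mirror image on the mirror variety, two corners shared by $d^*$ and $c$ correspond to two corners shared by $d$ and $c^*$. Applying the previous paragraph's construction with $c$ replaced by $c^*$ (and using seven copies of $c^*$ together with $d$) produces a corner solution modeled on $c^*$.

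The key point, and the only place where care is needed, is the symmetry argument in the last paragraph: the varieties that share no corners with $c$ apart from $c^*$ are precisely the mirrors of varieties that do share corners with $c$, so they automatically share corners with $c^*$. This pairing between $c$ and $c^*$ is exactly what makes the hypothesis of seven cubes of each variety just enough to cover both eventualities.
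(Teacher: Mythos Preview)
Your argument is correct. You classify the extra variety $d$ via the structural description following Lemma~\ref{lem:cornerfacts} and then use the mirror involution to transfer the ``shares two corners with $c$'' property from $d^{*}$ to the statement that $d$ shares two corners with $c^{*}$. That step is sound: if $T$ is a corner triple common to $d^{*}$ and $c$, then its mirror $T^{*}$ is common to $d$ and $c^{*}$.

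The paper's proof reaches the same conclusion by a different route: it invokes Corollary~\ref{cor:cornershare}, which says that the varieties sharing no corners with $c$ are exactly $c^{*}$ together with the row and column of $c$ in the tableau. Since the row/column of $c$ and the row/column of $c^{*}$ are disjoint (apart from $c$ and $c^{*}$ themselves), any third variety must share corners with one of them. Your argument is more elementary in that it avoids the tableau machinery entirely and works directly from the classification of the $29$ other varieties plus the mirror symmetry; the paper's argument is shorter because the tableau corollary has already packaged that symmetry. Substantively, your mirror-symmetry observation is precisely what underlies Corollary~\ref{cor:cornershare}, so the two proofs are close cousins rather than genuinely distinct strategies.
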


\begin{proof}
In the tableau, assume that $c$ is the distinguished variety.  By Corollary \ref{cor:cornershare}, any variety that isn't $c$ or $c\mir$ shares two corners with either $c$ or $c\mir$.
\end{proof}

\begin{lemma}  \label{lem:makecornermin}\cite[Lemma 3.2]{BerkoveEtAl}
It is always possible to find a corner solution given 15 cubes that share one opposite pair.
\end{lemma}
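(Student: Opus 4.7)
By Lemma \ref{lem:sixvar}, the 15 cubes lie among the six varieties sharing the given opposite pair; these split into three mirror pairs $\{c_i, c_i^*\}$ for $i = 1, 2, 3$, and I write $n_v$ for the repetition number of variety $v$. Let $c_1$ denote a variety of maximum count $n_1$; since $\sum_v n_v = 15$, pigeonhole forces $n_1 \geq 3$. The plan is to extract a feasibility criterion from Lemma \ref{lem:opppairfacts} that decides when a corner solution modeled on a chosen $v$ can be assembled, and then verify case by case on $n_1$ that at least one variety satisfies it.

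By Lemma \ref{lem:opppairfacts}, the eight corners of any solution modeled on $v$ partition into four disjoint 2-element sets $P_u$, one for each variety $u \notin \{v, v^*\}$; a $u$-cube fits only into the two corners of $P_u$, a $v$-cube into any corner, and a $v^*$-cube into none. Therefore a corner solution modeled on $v$ exists precisely when we can choose $y_u \leq \min(n_u, 2)$ non-$v$-cubes so that the remaining $8 - \sum_u y_u$ corners can be filled with $v$-cubes, i.e., when
\[
T(v) \;:=\; n_v + \sum_{u \notin \{v, v^*\}} \min(n_u, 2) \;\geq\; 8.
\]

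The case split on $n_1$ is then essentially pigeonhole. If $n_1 \geq 8$, take eight $c_1$'s. If $n_1 = 7$, either some remaining cube is not $c_1^*$ (giving $T(c_1) \geq 7 + 1$, which is also immediate from Lemma \ref{lem:two7s}), or all eight remaining cubes are $c_1^*$ and I use those. If $n_1 = 6$, the bound $n_{c_1^*} \leq 6$ forces at least three non-$c_1^*$ cubes among the remaining nine; those cubes contribute at least $2$ to $\sum \min(n_u, 2)$, yielding $T(c_1) \geq 8$ (equivalently, Corollary \ref{cor:makecorner} applies). For $n_1 \in \{3, 4\}$, the non-$c_1^*$ mass is at least $15 - 2n_1 \geq 7$ distributed among four varieties each bounded by $n_1$; a short check of the extremal distributions shows $\sum \min(n_u, 2) \geq 8 - n_1$, hence $T(c_1) \geq 8$.

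The hard part, and the step that motivates tracking the score $T(v)$ in the first place, is the case $n_1 = 5$: this is the only regime where modeling on $c_1$ can fail. Among the $\geq 5$ non-$c_1^*$ cubes across four varieties each bounded by $5$, the sum $\sum \min(n_u, 2)$ drops below $3$ only when all of this mass lies in a single variety $v_0$, forcing the rigid distribution $n_{c_1} = n_{c_1^*} = n_{v_0} = 5$. In that configuration I re-target the solution to $v_0$:
\[
T(v_0) \;=\; 5 + \min(5,2) + \min(5,2) + 0 + 0 \;=\; 9,
\]
so a corner solution modeled on $v_0$ still exists. Verifying that this is the unique obstruction for $n_1 = 5$, and that the switch of base variety always resolves it, is the most delicate step; the remaining cases reduce to routine pigeonhole once the criterion $T(v) \geq 8$ is in hand.
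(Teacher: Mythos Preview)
Your proof is correct and follows essentially the same strategy as the paper's sketch---a case analysis on the largest repetition number $n_1$---but you have made the argument considerably more explicit. The paper merely outlines the case $n_1 = 7$ via Lemma~\ref{lem:two7s} and asserts that smaller $n_1$ are handled similarly ``with lower thresholds''; you have formalised this by extracting the feasibility criterion $T(v) = n_v + \sum_{u \notin \{v,v^*\}} \min(n_u,2) \ge 8$ from Lemma~\ref{lem:opppairfacts} (using that the four $2$-corner sets $P_u$ partition the eight corners of $v$), and then verified it case by case, including the one genuinely delicate configuration $n_{c_1} = n_{c_1^*} = n_{v_0} = 5$ where a change of base variety is required.
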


\begin{proof}
A sketch is as follows: Find the variety, $c$, that occurs with largest repetition number in the set of $15$.  If $|c|=7$, then a set of seven copies each of $c$ and $c\mir$ does not have a subset that forms a corner solution, but the set formed by adding any other variety does by Lemma \ref{lem:two7s}.  The cases of smaller $|c|$ are similar, and have lower thresholds.    
\end{proof}

We showed that $\fr(2) = 24$ in Section \ref{sec:2frame}.  Since $20$ cubes are required to build the $\cubed{3}$ frame, we also have that $\fr(3) \geq 24$.  We now show that this inequality is sharp, which is the second part of Theorem A.  

We base our proof on the technique used in  Theorem 3.4 of \cite{BerkoveEtAl}.

\begin{theorem}[Theorem A, part 2] \label{thm:3soln}
The frame of a $\cubed{3}$ puzzle can always be completed given $24$ arbitary cubes, so $\fr(3) = 24$.
\end{theorem}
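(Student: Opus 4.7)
The plan is to combine Theorem~A part~1 with a case analysis of how the $16$ leftover cubes can fill the edges of the frame. By Theorem~\ref{thm:2soln}, the $24$ given cubes contain an $8$-cube corner solution modeled on some variety $c$; fix these $8$ as the corners of the frame. It remains to place $12$ of the $16$ leftover cubes into the $12$ edge positions of the frame. Each edge position corresponds to one of the $12$ adjacent pairs on $c$, and a cube fits that position exactly when it shares that adjacent pair. By Lemma~\ref{lem:sidefacts}, every colored cube shares $9$, $10$, or $12$ of $c$'s adjacent pairs, so each leftover cube is eligible for at least $9$ of the $12$ slots.

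The second step is to treat the edge-filling as a bipartite matching problem with the $12$ slots on one side and the $16$ leftover cubes on the other, joining a cube to each slot it fits. Since every cube misses at most $3$ slots, Hall's condition is automatic for slot-subsets of size at least $4$. The task thereby reduces to verifying Hall's condition for slot-subsets of size at most $3$. A failure on such a set $S$ forces many of the $16$ cubes to avoid every slot in $S$, which by Lemmas~\ref{lem:mirror} and~\ref{lem:sixvar} means those cubes must share the pairs in $S$ as opposite pairs --- and hence lie in at most $6$ varieties (if $|S| = 1$) or in at most $2$ mirror varieties (if $|S| \in \{2, 3\}$).

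The main obstacle is handling this concentration. Since we have freedom in how to choose the $8$ corner cubes, the strategy is to re-model the frame on a variety $d$ whose opposite pairs include the problematic set $S$. For instance, if a slot-subset $S$ of size $3$ is avoided by $14$ or more of the leftover cubes, then at least $14$ of the original $24$ cubes lie in the mirror pair $\{d, d\mir\}$ where $d$'s opposite pairs are exactly $S$; with this many copies on hand, one can assemble a corner solution on $d$ by combining Corollary~\ref{cor:makecorner}, Lemma~\ref{lem:two7s}, and Lemma~\ref{lem:makecornermin} to suit the particular multiplicities. Once the frame is modeled on such a $d$, the erstwhile-problematic cubes share an extra opposite pair with $d$ and hence miss fewer slots, so Hall's condition is restored on the new matching problem. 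The most delicate sub-case is when nearly all $24$ cubes lie in a single mirror pair $\{d, d\mir\}$ --- but there the abundance of $d$ itself allows us to build the corner solution directly from $\{d, d\mir\}$, after which the twelve edges are filled by applying Lemma~\ref{lem:extension} with $k \ge 9$.
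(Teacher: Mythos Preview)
Your plan is correct and is essentially the paper's own argument, recast in the language of Hall's marriage theorem: the paper's sorted condition $c_i \ge i$ together with the quadratic inequality $16\cdot 9 \le \sum c_i \le j(j-1)+(12-j)\cdot 16$ is exactly the verification that Hall can fail only on slot-sets of size at most three, and the paper's Cases~1--3 are precisely your re-modeling strategy on a variety $d$ whose opposite pairs absorb the deficient set $S$.  One refinement to make explicit when you write it out: in the $|S|=1$ case, after re-modeling on $d$ the eight \emph{original} corner cubes re-enter the leftover pool and need not share the distinguished opposite pair with $d$, so ``Hall's condition is restored'' is not immediate---the paper handles this by running the counting argument a second time (now with most leftovers sharing at least ten edges), which pushes any residual obstruction into a single mirror pair exactly as your final sentence anticipates.
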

\begin{proof}
Given $24$ cubes, there is always a $\cubed{2}$ solution.  If we can place twelve of the remaining $16$ cubes into the twelve open slots in the frame then we have a solution.  We assume this isn't possible and show that we can find a different corner solution whose frame can be completed.  Call the set of $16$ cubes $S$.   We know from Lemma \ref{lem:extension} that the cubes from $S$ can be fit into at least nine out of the twelve slots in the $\cubed{3}$ frame.  Track these adjacent pairs (edges) individually by labeling the twelve adjacent pairs of the corner solution as $e_1$, through $e_{12}$, and let $c_i$ denote the number of cubes out of the $16$ remaining that have an adjacent pair $e_i$.  We note that $c_i \geq  i$ is a sufficient condition for a solution to the frame--pick any cube that has adjacent pair $e_1$ as its representative in the frame and continue the process in ascending order.  If this cannot be done, then there must be a largest index $j$ with $c_j < j$. 

We bound the total number of adjacent pairs two ways.  On the low end, each of the $16$ cubes in $S$ shares at least nine adjacent pairs with the solution by Lemma \ref{lem:sidefacts}.  On the other hand, the $j$ edges $e_1, \ldots,  e_j$ occur no more than $j - 1$ times each, and the $12 - j$ edges $e_{j+1}, \ldots, e_{12}$ occur as many as $16$ times.  We have the inequalities
\[
16 \times 9 \leq \sum_{i=1}^{12} c_i \leq (j)(j-1) + (12 - j)(16).
\]
When we solve this quadratic inequality over the integers, we find that $j \leq 3$ or $j \geq 15$.  The latter case is impossible as $j \leq 12$.  We conclude that if we cannot complete the frame, it is because at most three edges could not be matched.  Although this is consistent with the results of Lemma \ref{lem:extension}, we now know exactly how we might fail to have a solution.

\medskip

\noindent \textbf{Case 1: $e_1 = 0$}.  In this case, there is one adjacent pair, say $\{1,2\}$, that is opposite in all $16$ cubes in $S$.  By Lemma \ref{lem:makecornermin}, there is a subset of eight cubes that forms a corner solution, and this corner solution must have $\{1,2\}$ as an opposite pair.  We repeat the enumerating process above with this new corner solution.  Each cube shares at least ten adjacent pairs with the corner solution by Lemma \ref{lem:sidefacts}, so 
\[
16 \times 10 \leq \sum_{i=1}^{12} c_i \leq (j)(j-1) + (12 - j)(16).
\]

This implies that $j \leq 2$.  If there is no solution, then there are two new possibilities.  The first is that $c_1 = 0$.  In this case, there is another pair $\{ x,y\}$, which is opposite on all $16$ cubes.  Since the cubes in $S$ already have $\{1, 2\}$ as an opposite pair, they must have the same three opposite faces.  That is, the cubes consist of a variety $c$ and its mirror variety $c\mir$, so they all have the same adjacent pairs.  At least eight of these cubes are of the same variety and form a corner solution.  Using Lemma \ref{lem:extension}, we can place the eight cubes not in $S$ into the $\cubed{3}$ frame.  Remaining cubes from $S$ now complete the solution.

The second possibility is that $c_1 = c_2 = 1$.  Furthermore, we can assume that adjacent pairs $e_1$ and $e_2$ are on the same cube, since if the adjacent pairs are on different cubes then each can be used as its representative in the frame.  As in the previous case, there are at least $15$ cubes that share the same opposite pairs and are of exactly two varieties.  Pick eight identical cubes for the corner solution and proceed as before.
\medskip

\noindent \textbf{Case 2: $e_1 = e_2 = 1$}.  We can again assume that the adjacent pairs $e_1$ and $e_2$ are from the same cube.  In fact, we can also assume that the pairs $e_1$ and $e_2$ have no colors in common.  For say $e_1$ and $e_2 $ are the adjacent pairs $\{x,y\}$ and $\{w,z\}$.  Then $\{x,y\}$ and $\{w,z\}$ must be opposite pairs on the remaining $15$ cubes of S, which can only happen if $x$, $y$, $w$, and $z$ are distinct.  Therefore, these $15$ cubes have the same three opposite pairs.  We now proceed as in Case 1.  

\medskip

\noindent \textbf{Case 3: $e_1 = e_2 = e_ 2 = 2$}.   If these adjacent pairs are from three cubes then we can pick representatives for the three edges and complete the $\cubed{3}$ frame.  If the adjacent pairs are from the same two cubes, then there are $14$ cubes from $S$ that have the same opposite pairs.  If there are eight of one variety we can complete the frame as above.  Otherwise, if there are seven each of a variety $c$ and its mirror image $c\mir$, then take a cube of another variety from $S$ and apply Lemma  \ref{lem:two7s} to build a corner solution modeled on $c$ or $c\mir$.  Start filling in the edges of the frame using the cubes of varieties other than $c$ and $c\mir$, then use varieties $c$ and $c\mir$ to complete the frame.  \end{proof}

\section{Building a $\cubed{n}$ solution:  $\fr(n) = 12n - 16$ for $n \geq 4$}\label{sec:framebuild}
In this section we prove Theorem B by showing that for $n \geq 4$ we can always build a frame with the smallest possible number of cubes.   In \cite{BerkoveEtAl}, it was possible to adjust the argument in Section \ref{sec:3frame} to show that the construction of an $\cubed{n}$ frame was always possible.  Unfortunately, that does not work in our setting, since in \cite{BerkoveEtAl} it was assumed that there were roughly $n^3$ cubes available to build the frame, whereas here we assume that the number only grows linearly with $n$.  

\begin{theorem}[Theorem B]\label{thm:mainA}
If $n \geq 4$, then $\fr(n) = 12n -16$.
\end{theorem}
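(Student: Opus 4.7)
The lower bound $\fr(n) \geq 12n - 16$ is immediate, since an $\cubed{n}$ frame contains exactly $12n-16$ slots. For the upper bound, my plan is to replay the flavor of the proof of Theorem~\ref{thm:3soln}, but to phrase the placement step formally as a bipartite $b$-matching, since the available cube count now matches the target with no slack.

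First, because $12n - 16 \geq 32 \geq 24$ for $n \geq 4$, Theorem~\ref{thm:2soln} produces a corner solution from the collection; set its eight cubes aside. The remaining $12(n-2)$ cubes must fill the twelve edges with $n-2$ cubes per edge, and by Lemma~\ref{lem:sidefacts} each such cube matches at least nine of the twelve edges. Viewing this as a bipartite $b$-matching (cubes against edges with capacity $n-2$), Hall's theorem gives an assignment iff for every edge subset $E$ the cubes whose match-sets are contained in $E$ number at most $|E|(n-2)$. Since each cube misses at most three edges, only subsets $E$ with $|E^c| = k \in \{1,2,3\}$ can witness a failure. By Lemmas~\ref{lem:mirror} and~\ref{lem:sixvar}, an obstruction at $k \in \{2,3\}$ concentrates more than $(12-k)(n-2)$ cubes into a single mirror pair $d, d^*$, while an obstruction at $k=1$ concentrates more than $11(n-2)$ cubes among the (at most) six varieties sharing one given opposite pair.

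When Hall's condition holds, the frame is assembled edge-by-edge in the style of Lemma~\ref{lem:extension}. When it fails, I would return the eight original corner cubes to the pool and use Lemma~\ref{lem:makecornermin} (or the refinements~\ref{lem:sixvar}--\ref{lem:two7s}) to build a \emph{new} corner solution $c'$ that absorbs the obstruction: for $k \in \{2,3\}$ take $c'$ of the concentrated variety $d$, so those cubes share all three opposite pairs with $c'$ and match every new edge; for $k=1$ arrange that the over-constrained color pair is opposite in $c'$, cutting each over-constrained cube's deficit to at most two new edges. The crucial arithmetic is that the ``minority'' cubes outside the dominant mirror pair (at most $3(n-2)$) together with the eight returned corner cubes comprise fewer than $9(n-2)$ cubes exactly when $3(n-2) + 8 \leq 9(n-2)$, i.e., $n \geq 4$. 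This confirms Hall's condition for $c'$ in the $k \in \{2,3\}$ cases.

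\textbf{Main obstacle.} The subtle case is $k=1$: the concentration is spread across up to six varieties, so a single swap does not collapse it entirely. Following the recursive structure of the proof of Theorem~\ref{thm:3soln}, one shows that any residual Hall failure relative to $c'$ must itself be of type $k \in \{2,3\}$ (forcing the remaining bad cubes into a single mirror pair), at which point a second swap finishes the argument. Carrying out this two-step recursion cleanly -- and verifying that the linear-in-$n$ inequalities continue to close at each step -- is the main obstacle, and the hypothesis $n \geq 4$ is exactly what makes all of them hold with the required slack, explaining why the $n = 3$ case demanded a tailored finite argument rather than this general machinery.
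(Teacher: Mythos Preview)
Your overall strategy is different from the paper's, and that difference is worth noting. The paper proceeds by \emph{induction on $n$}: it assumes an $(n-1)$-frame has already been built from the collection and then tries to insert the twelve extra cubes, one per edge. Because only twelve cubes are in play at the inductive step, the ``swap'' arguments (Cases~1--3 in the paper) are about moving individual cubes between edge slots, and when swapping fails the paper can conclude that \emph{all} of the cubes already sitting on nine edges share a given opposite pair, which drives the rebuild. Your direct Hall/$b$-matching formulation is a perfectly natural alternative and handles $k\in\{2,3\}$ cleanly, but it loses the leverage the induction gives in the delicate case.

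The concrete gap is in your $k=1$ case. You assert that after rebuilding $c'$ with the offending pair $\{1,2\}$ made opposite, any residual Hall failure must be of type $k\in\{2,3\}$. This is not justified, and in fact need not hold. After the rebuild you still have up to $n+5$ ``minority'' cubes (the fewer-than-$(n-2)$ original minority plus the eight returned corner cubes) that need not have $\{1,2\}$ opposite. Now consider a $k=1$ failure at an edge $e=\{3,4\}$ of $c'$ disjoint from $\{1,2\}$: the more-than-$11(n-2)$ offending cubes have $\{3,4\}$ opposite, and the vast majority of them (all but at most $n+5$) also have $\{1,2\}$ opposite and hence are of type $d$ or $d^*$. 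But these $d,d^*$ cubes miss exactly the two edges $\{3,4\}$ and $\{5,6\}$ of $c'$, and a count gives only $|d\cup d^*|\ge 10(n-2)-6$, which is \emph{not} enough to trigger a $k=2$ failure (that needs strictly more than $10(n-2)$). So you can genuinely land in a second $k=1$ failure, contrary to your claim. The paper's inductive swap argument in its Case~1 is precisely what handles this regime: by working inside an already-built $(n-1)$-frame, a failed swap forces the entire contents of nine edges (not merely ``most of'' them) to share the opposite pair, and from there the subset/variety partitioning of Lemma~\ref{lem:sixvar} and Corollary~\ref{cor:makecorner} closes the argument. Your plan may be repairable by iterating the rebuild once more and tracking the $d,d^*$ concentration, but as written the $k=1$ recursion does not terminate for the reason you state.
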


\begin{proof}
We proceed using induction.  For the base case, we note that the $4$-frame contains $32$ cubes.   By Theorem \ref{thm:3soln}, any collection of $24$ cubes contains a $\cubed{3}$ solution.  Therefore, any collection of cubes that with enough cubes for a $4$-frame contains a subset of $20$ cubes that forms a $3$-frame.

Now assume that we can solve the frame for the $\cubed{(n-1)}$ cube.  The difference between an $(n - 1)$-frame and an $n$-frame is twelve cubes, one for each edge.  If the $12$ cubes can not be inserted to extend the frame, we will show that it is possible to construct another frame using a different corner solution.  Referencing Lemma \ref{lem:extension}, we see that we will be able to fit at least nine of the twelve cubes into the frame.  We proceed by cases.   

\medskip

\noindent \textbf{Case 1: exactly eleven edges of the $n$-frame are complete}.  Assume that the missing adjacent pair on the frame is $\{1,2\}$, and that, in a worst case, the unplaced cube shares only nine edges with the corner solution.  If a cube in one of the corresponding nine completed edges of the frame contains the adjacent pair $\{1,2\}$, move it to the unfilled position and put the unused cube in its place to complete the puzzle.  If such a swap is not possible, then $\{1,2\}$ must be an opposite pair in all $9(n-2)$ cubes as well as in the unused cube. 

Next, assume there is a cube $c$ among the $2(n-2)$ cubes on the other two non-$\{1,2\}$ frame edges that has $\{1,2\}$ as a (hidden) adjacent pair.  Denote $c$'s adjacent pair contribution to the frame by $\{x,y\}$.  If there is another cube in the nine other edges that also has an adjacent pair $\{x,y\}$, use it to replace $c$ and put the original unplaced cube into its slot.  If not, on each of the $9(n-2)$ cubes colors $x$ and $y$ must form an opposite pair.  Since $\{1,2\}$ is already an opposite pair on these cubes, neither $x$ nor $y$ can be colors $1$ or $2$.

By Lemma \ref{lem:cornerfacts}, it follows that all $9(n-2)$ cubes are of two mirror varieties, and have the same adjacent pairs.  Since $n \geq 4$, we can pick eight of one variety for a corner solution.  Start building the frame using the cubes from the two edges and the unfinished edge.  This is possible by Lemma \ref{lem:extension}.  Complete the frame with the remainder of the $9(n-2)$ cubes, any of which can be used for any edge.

The last possibility is that there is indeed no cube among the $2(n-2)$ on the other two edges that has a $\{1,2\}$ adjacent pair, implying that $\{1,2\}$ is an opposite pair on at least $11(n-2)$ cubes and the unplaced cube.  Denote this set of $11n - 21$ cubes by $S$, and the remaining $n - 3$ cubes of the frame by $T$.  Now $11n - 21 \geq 23$ for $n \geq 4$.  Using Lemma \ref{lem:sixvar}, partition $S$ into three subsets, each consisting of a variety and its mirror image (which have the same adjacent pairs).  One of these subsets will have at least eight elements, so at least one cube variety, say $c$, has multiplicity $4$.  Assume first that there are at least two cubes in each of the other two subsets.  Then by Corollary \ref{cor:makecorner}, these can be assembled into a corner solution modeled on $c$.  We note that 
\[
\left \lceil \frac{11n-21}{3} \right \rceil- 4 \geq 2(n-2) \text{ for } n \geq 4,
\]
so after building the corner solution, there are enough cubes with the same adjacent pairs as $c$ to complete any two edges.  Use Lemmas \ref{lem:sidefacts} and \ref{lem:extension} to fill in as much as possible of ten edges of the $n$-frame using cubes from the other subsets and $T$.  Then the last $\lceil \frac{11n-21}{3} \rceil- 4$ cubes can be used in any remaining open position in the $n$-frame. 

Next, assume that there is only one cube in one of the subsets of $S$.  Move that cube to $T$; now $S$ consists of four varieties in two subsets.  Since  $11n - 22 \geq 22$ for $n \geq 4$, there is at least one subset with $11$ cubes and six of some variety, say $c$.  As above,  
\[
\left \lceil \frac{11n-21}{2} \right \rceil- 4 \geq 2(n-2) \text{ for } n \geq 4.
\]
As long as there are two cubes in the other subset, we can apply Corollary \ref{cor:makecorner} to build a corner solution modeled on $c$.  The rest of the construction is as before.  Finally, if there is no more than one cube in the other subset, then $11n - 23$ cubes are one of two varieties.  Use the one that appears most frequently for the corner solution, and the construction of the $n$-frame is straightforward.

\medskip

\noindent \textbf{Case 2: exactly ten edges of the $n$-frame are complete}.   There are two cubes of the twelve that remain to be placed.  Note that since each cube cannot be placed into the edges, the colors that are adjacent in the unfilled places of the frame are opposite pairs in the two cubes.   This also implies that the two unfilled adjacent pairs do not share a common color.  In a worst case the two cubes share the same nine edges with the corner solution.  If some cube in one of the nine corresponding edges of the frame also has one of the missing adjacent pairs, then move it to the unfilled position and put one of the two unused cubes in its place.  We are now in the situation in Case 1, so there is a solution. 

If such a swap cannot be done, then none of the $9(n-2)$ cubes from the completed edges nor the two used cubes have the two unfilled adjacent pairs in the corner solution.  These $9n - 16$ cubes must therefore share the same two, and hence three, opposite pairs.  Call the set of these cubes $S$, and the remaining $3n - 8$ cubes in the frame the set $T$.  We note that each cube in $S$ is one of two varieties of mirror cubes, and all the cubes in $S$ share the same adjacent pairs. Since $9n - 16 \geq 20$ for $n \geq 4$, at least ten of the  $9n - 16$ cubes are identical cubes.  Pick eight of these to make the corner solution, then start filling in the edges of the frame using cubes from the set $T$.  Since $|T| < 9(n - 2)$, by Lemma \ref{lem:extension} all the cubes in $T$ can be placed into the new frame.  The cubes from $S$ share the same adjacent pairs as the corner solution, so they can be used to complete the $(n+1)$-frame. 
\medskip

\noindent \textbf{Case 3: exactly nine edges of the $n$-frame are complete}.   There are three cubes of the twelve that remain to be placed.  Since none of the three fit into the existing frame, by Lemma \ref{lem:sidefacts} all three share the same nine adjacent pairs with the $n$-frame.  As in the prior cases, if some cube in the corresponding edges of the frame can be used to complete an unfilled edge, swap it out and put one of the three unused cubes in its place.  We are now in Case 2, so there is a solution.

If no such swap is possible, then the $9(n-2)$ cubes from the completed edges in the frame and three unused cubes share the same three opposite pairs.  We call the set of these $9n - 15$ cubes $S$, the remaining $3n - 9$ cubes in the frame the set $T$, and proceed as in Case 2.  
\end{proof}

\section{Open Questions and Final Remarks}
Although MacMahon's original questions are now nearly 100 years old, they are still generating fruitful problems.  In this section we describe a number of open questions for the interested reader to pursue.  We start with the tableau, whose associated $S_6$ action made it very useful in reducing the number and type of cases we needed to consider in this paper.  We believe that there is additional structure in the tableau still to be discovered that would further reduce the amount of computation required to complete the arguments.  This motivates our first question.

\begin{problem}
Refine the analysis of the $S_6$ action on the tableau, and determine which of the results in Lemma \ref{lem:computedcases} follow from this finer understanding.
\end{problem}

The Colored Cubes Puzzle whose solution is in this paper is just one member of a larger family of related puzzles.  A pretty generalization in the spirit of MacMahon's Problem \ref{Prob2} in the introduction is determine the minimum number of cubes required to solve the $\cubed{3}$ puzzle so that all of the internal faces also have matching colors.  We expect that the $\cubed{n}$ version of this problem would be very challenging, but even asymptotic bounds on the number of cubes would be interesting.  

Another way to generalize the problem is by changing the number of colors.  For example:
\begin{problem}
For $n > 1$, determine $g(n,k)$, the minimum number of cubes colored with $k$ colors required to solve the $\cubed{n}$ Colored Cubes Problem.
\end{problem}
There are two variations of this problem, depending on whether $k < 6$ or $k > 6$.  When $k < 6$, one might start by assuming a regularity condition, and say that a cube is \textit{k-colored} if each cube face has a single color and all $k$ colors appear on at least one face of the cube.  In this case, a solution to the frame implies a solution for the puzzle, and $g(n,k)$ is the same as the function $\fr(n,k)$, the analog of $\fr(n)$.  The authors of this paper have completed the calculations for $k = 2$ and $k = 3$ \cite{BCCK}.  The cases of $n = 4$ and $n = 5$ are more challenging, in large part because of the large number of distinct cubes.   The total number of distinct cubes up to rigid rotation can be determined using a Polya counting argument, and are given in the following table for $k \leq 6$.  

\begin{center}
\begin{tabular}{| l |  c |  c | c |  c | c  | }
\hline
Number of Colors &  2  & 3 & 4 & 5 & 6 \\ 
\hline
Distinct Cubes & 8  &  32 &  68  &  75  &  30   \\
\hline
\end{tabular}
\end{center}

\medskip

When $k > 6$, it is no longer possible for all colors to appear on each cube, although we can still apply the regularity condition that no color appear more than once on a face of any cube.  We note, however, that the successful construction of a frame no longer implies that the rest of the $\cubed{n}$ cube can be completed.  We expect that $g(n,k) > \fr(n,k)$ for $k > 6$ and sufficiently large $n$ (probably $n = 2$ or $3$!).  In addition, we also believe that for fixed $n$, $g(n,k) - \fr(n,k)$ should increase with $k$, the number of colors.  This leads into the third problem.

\begin{problem}
Determine asymptotic bounds, both upper and lower, on the sizes of $\fr(n,k)$ and/or $g(n,k)$.
\end{problem}

For all values of $k$, there are analogous problems that arise when the regularity condition is dropped.  For $k < 6$, this means that some colors might not appear or certain (or any) cubes.  For $k \geq 6$, this means that a color may appear more than once on a cube.

Finally, a number of people have analyzed the complexity of puzzles like the Colored Cubes Puzzle.   A well-known example is  Instant Insanity$\textsuperscript{\textregistered}$, a $4$-colored puzzle whose elegant graph theoretic solution is presented in many introductory texts on combinatorics (see \cite{Chartrand}, for example).  Robertson and Munro showed in \cite{RobertsonMunro} that the solution to a generalization of the Instant Insanity Puzzle with $n$ cubes and $n$ colors is NP-complete.  A more recent work by Demaine, et. al. \cite{Demaine} studied variations of Instant Insanity with several types of prisms; some of these puzzles have solutions that are NP-complete, others can be solved in polynomial time.  There is an analogous problem for the Colored Cubes Puzzle.

\begin{problem}
Determine the complexity of solving the $\cubed{n}$ problem with $12n - 16$ $n$-colored cubes.
\end{problem}

Towards the end of the writing of this paper, we became aware of an arXiv preprint ``On a Generalization of the Eight Blocks to Madness Puzzle''  by Kazuya Haraguchi \cite{Haraguchi}, where, among other things, Haraguchi determines the value of $\fr(2)$.  Section \ref{sec:2frame} of this work and the preprint share a number of results, like Lemma \ref{lem:10cases} and the example at the beginning of Section \ref{sec:2frame} of a collection of $23$ cubes without a corner solution.   The proofs in \cite{Haraguchi} differ from ours, and rely on encoding a subset of cubes into an associated multigraph.  The existence of corner solutions can then be determined by counting the number of tree components in the graph.  The arguments are clever, and we recommend the article to the interested reader.  Some arguments in \cite{Haraguchi} also utilize the tableau, although to a lesser extent than in this paper.

\vfill

\eject
\appendix{Appendix: The Cube Tableau}

\medskip

\begin{tikzpicture}[scale=0.55]
\draw [red] (-5.5,-1) rectangle (-1.5,4.5);
\cubepic{6}{1}{2}{5}{4}{3}{0}{0}
\cubepic{2}{6}{5}{3}{4}{1}{4}{0}
\cubepic{6}{2}{1}{3}{4}{5}{8}{0}
\cubepic{1}{6}{3}{5}{4}{2}{12}{0}
\cubepic{3}{1}{5}{2}{4}{6}{16}{0}
\draw [red] (-1.5,-7) rectangle (2.5,-1.5);
\cubepic{6}{5}{2}{1}{4}{3}{-4}{-6}
\cubepic{1}{5}{6}{3}{4}{2}{4}{-6}
\cubepic{5}{1}{2}{3}{4}{6}{8}{-6}
\cubepic{2}{6}{1}{3}{4}{5}{12}{-6}
\cubepic{2}{5}{3}{6}{4}{1}{16}{-6}
\cubepic{3}{6}{5}{1}{4}{2}{0}{-12}
\draw [red] (2.5,-13) rectangle (6.5,-7.5);
\cubepic{2}{3}{5}{6}{4}{1}{-4}{-12}
\cubepic{2}{6}{1}{5}{4}{3}{8}{-12}
\cubepic{2}{5}{3}{1}{4}{6}{12}{-12}
\cubepic{6}{3}{2}{1}{4}{5}{16}{-12}
\cubepic{2}{1}{5}{3}{4}{6}{0}{-18}
\cubepic{1}{6}{2}{5}{4}{3}{4}{-18}
\draw [red] (6.5,-19) rectangle (10.5,-13.5);
\cubepic{2}{6}{3}{1}{4}{5}{-4}{-18}
\cubepic{3}{6}{5}{2}{4}{1}{12}{-18}
\cubepic{5}{6}{1}{3}{4}{2}{16}{-18}
\cubepic{1}{6}{2}{3}{4}{5}{0}{-24}
\cubepic{5}{2}{1}{3}{4}{6}{4}{-24}
\cubepic{6}{3}{2}{5}{4}{1}{8}{-24}
\draw [red] (10.5,-25) rectangle (14.5,-19.5);
\cubepic{6}{1}{5}{3}{4}{2}{-4}{-24}
\cubepic{6}{5}{1}{2}{4}{3}{16}{-24}
\cubepic{6}{3}{5}{2}{4}{1}{0}{-30}
\cubepic{3}{6}{1}{2}{4}{5}{4}{-30}
\cubepic{6}{5}{3}{1}{4}{2}{8}{-30}
\cubepic{5}{6}{2}{1}{4}{3}{12}{-30}
\draw [red] (14.5,-31) rectangle (18.5,-25.5);
\cubepic{5}{1}{3}{2}{4}{6}{-4}{-30}
\end{tikzpicture}

\nocite{*}

\bibliography{ColoredCubes}
\bibliographystyle{plain}
\end{document}